%
%
%


\documentclass{amsart}

\usepackage{amssymb}

\newcommand{\hb}{{\boldsymbol{h}}}
\newcommand{\pb}{{\boldsymbol{p}}}
\newcommand{\CC}{{\mathbb{C}}}
\newcommand{\NN}{{\mathbb{N}}}
\newcommand{\RR}{{\mathbb{R}}}
\newcommand{\ZZ}{{\mathbb{Z}}}
\newcommand{\cI}{{\mathcal{I}}}
\newcommand{\cP}{{\mathcal{P}}}
\newcommand{\cQ}{{\mathcal{Q}}}
\newcommand{\Ideal}[1]{{\left\langle #1 \right\rangle}}
\newcommand{\Span}{{\mbox{\rm span}\,}}




\newtheorem{theorem}{Theorem}[section]
\newtheorem{lemma}[theorem]{Lemma}
\newtheorem{proposition}[theorem]{Proposition}
\newtheorem{corollary}[theorem]{Corollary}

\theoremstyle{definition}
\newtheorem{definition}[theorem]{Definition}
\newtheorem{example}[theorem]{Example}

\theoremstyle{remark}
\newtheorem{remark}[theorem]{Remark}

\numberwithin{equation}{section}

\begin{document}

\title{Kernels of discrete convolutions and subdivision operators}


\author{Tomas Sauer}
\address{Lehrstuhl f\"ur Mathematik mit Schwerpunkt Digitale
  Bildverarbeitung/FORWISS, University of Passau, Innstr.~43, D--94032
  Passau, Germany}
\curraddr{}
\email{Tomas.Sauer@uni-passau.de}
\thanks{}


\subjclass[2010]{Primary 39A14, 65D17, 65Q10.} 

\date{March 29, 2014}

\dedicatory{}

\commby{}

\begin{abstract}
  We consider kernels of discrete convolution operators or,
  equivalently, homogeneous solutions of partial difference operators
  and show that these solutions always have to be exponential
  polynomials. The respective polynomial space in connected directly
  though somewhat intricately to the multiplicity of the common zeros
  of certain multivariate polynomials, a concept introduced by
  Gr\"obner in the description of kernels of partial differential
  operators with constant coefficients. These results can are then used to
  determine the kernels of stationary subdivision operators as well.
\end{abstract}

\maketitle


\section{Introduction}
\label{sec:Intro}
This paper considers a simple question: which sequences $c :
\ZZ^s \to \RR$ can be kernels of convolution or subdivision
operators. Recall that a \emph{convolution operator} or \emph{filter}
based on a \emph{finite} impulse $h \in \ell_{00} (\ZZ^s)$ acts on as
sequence $c$ as 
\begin{equation}
  \label{eq:FilterDef}
  c = h * c = \sum_{\alpha \in \ZZ^s} h(\alpha) \, c( \cdot -
  \alpha), \qquad c \in \ell (\ZZ^s).
\end{equation}
Here and in what follows $\ell (\ZZ^s)$ denotes all multi-infinite
sequences, written as functions from $\ZZ^s \to \CC$ while $\ell_{00}
(\ZZ^s)$ stands for those with compact, i.e., finite, support: $\# \{
\alpha \in \ZZ^s \;:\; c(\alpha) \neq 0 \} < \infty$.

Convolution operators can also be viewed as \emph{partial difference
  operators}. Let $\tau_j : c \mapsto c(\cdot + \epsilon_j)$
denote the forward partial shift operator and $\epsilon_j$ the
$j$th unit index in $\NN_0^s$ as well as $\tau^\alpha :=
\tau_1^{\alpha_1} \cdots \tau_s^{\alpha_s}$, then
$$
h * c = \sum_{\alpha \in \ZZ^s} h(\alpha) \tau^{-\alpha} c =
h^* (\tau^{-1}) \, c,
$$
with the \emph{symbol}
$$
h^* (z) = \sum_{\alpha \in \ZZ^s} h(\alpha) z^\alpha, \qquad z \in
\CC_\times^s := ( \CC \setminus \{ 0 \} )^s,
$$
which associates to a finitely supported sequence $h \in \ell_{00}
(\ZZ^s)$ a \emph{Laurent polynomial}. Therefore, the kernels of the
convolution operators are the solution of the homogeneous difference
equation $h^* (\tau^{-1}) \, c = 0$.

It is not hard to guess what these solutions should be when taking
into account that for any
\emph{exponential sequence} $e_\theta : \alpha \mapsto \theta^\alpha$,
$\theta \in \CC_\times^s$, we get
\begin{equation}
  \label{eq:hetheta}
  h * e_\theta = \sum_{\alpha \in \ZZ^s} h(\alpha) \theta^{\cdot -
    \alpha} = e_\theta \, h^* (\theta^{-1}),  
\end{equation}
hence $e_\theta$ belongs to $\ker h$ if and only if $h^* (\theta^{-1})
= 0$. Therefore, the exponentials in the kernel of any
finitely supported convolution operator encoded in the zeros
of the symbol and it only remains to show that essentially no other
sequences can be annihilated by convolution operators. It is also to
be expected that the order of the zero at $\theta^{-1}$ will affect
the structure of the kernel and indeed, it will allow for some
exponential polynomial sequences.

The following classical result for $d=1$ is widely used
in systems theory and stated, for example, in
\cite[p.~543ff]{jordan65:_calcul} or, more as some type of ``cooking
recipe'', in \cite{goldberg58:_introd_differ_equat}.

\begin{theorem}\label{T:Kernel1D}
  Let $h \in \ell_{00} (\ZZ)$ whose symbol factors as
  $$
  h^* (z) = c z^m \prod_{\theta \in \Theta} ( z - \theta^{-1}
  )^{k_\theta}, \qquad k_\theta \in \NN,
  $$
  then
  $$
  \ker ( h * (\cdot) ) = \bigoplus_{\theta \in \Theta} e_\theta \, \Pi_{k_\theta-1}.
  $$
\end{theorem}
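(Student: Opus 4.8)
The plan is to prove the two inclusions together with a dimension count. For the inclusion $\supseteq$, I would first translate the symbol factorization into an operator factorization: since $h*(\cdot) = h^*(\tau^{-1})$ and the monomial $c\,z^m$ with $c\neq 0$ corresponds to the invertible operator $c\,\tau^{-m}$, it suffices to understand the operators $D_\theta := \tau^{-1} - \theta^{-1}$. A one-line computation gives $\bigl(D_\theta(e_\theta p)\bigr)(\alpha) = \theta^{\alpha-1}\bigl(p(\alpha-1) - p(\alpha)\bigr)$, so $D_\theta$ maps $e_\theta\Pi_k$ into $e_\theta\Pi_{k-1}$ and consequently $D_\theta^{k_\theta}$ annihilates $e_\theta\Pi_{k_\theta-1}$. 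Because $h^*(\tau^{-1})$ is, up to the invertible factor $c\,\tau^{-m}$, the product of the pairwise commuting operators $D_\theta^{k_\theta}$, it annihilates every summand $e_\theta\Pi_{k_\theta-1}$; this yields $\bigoplus_{\theta\in\Theta} e_\theta\Pi_{k_\theta-1}\subseteq\ker(h*(\cdot))$, once the sum is known to be direct.

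Next I would record that the sum $\sum_{\theta\in\Theta} e_\theta\Pi_{k_\theta-1}$ is direct. This is the classical linear independence of exponential-polynomial sequences with pairwise distinct bases, which I would prove by induction on $\#\Theta$: applying a sufficiently high power of some $D_{\theta_0}$ to a hypothetical vanishing combination strips the summand attached to $\theta_0$ while preserving the others, since $\bigl(D_{\theta_0}(e_\theta p)\bigr)(\alpha) = \theta^{\alpha-1}\bigl(p(\alpha-1) - (\theta/\theta_0)\,p(\alpha)\bigr)$ has the same degree as $p$ when $\theta\neq\theta_0$, so $D_{\theta_0}$ is injective (not merely degree-lowering) on each $e_\theta\Pi_k$ with $\theta\neq\theta_0$. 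Hence $\dim\bigoplus_{\theta\in\Theta} e_\theta\Pi_{k_\theta-1} = \sum_{\theta\in\Theta} k_\theta =: K$.

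For the reverse inclusion I would count $\dim\ker(h*(\cdot))$ directly from the difference equation. Writing $\prod_{\theta\in\Theta}(z-\theta^{-1})^{k_\theta} = \sum_{j=0}^{K} b_j z^j$, the constant term $b_0 = \prod_{\theta\in\Theta}(-\theta^{-1})^{k_\theta}$ is nonzero precisely because every $\theta\in\Theta$ is nonzero, and the leading coefficient $b_K$ equals $1$; thus $h*c = 0$ is equivalent to the two-sided recurrence $\sum_{j=0}^{K} b_j\, c(\cdot-j) = 0$ with both endpoint coefficients nonzero. Such a recurrence determines $c\in\ell(\ZZ)$ uniquely, both forward and backward, from its values on any $K$ consecutive integers, so $\dim\ker(h*(\cdot)) = K$. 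Comparing with the previous paragraph forces $\ker(h*(\cdot)) = \bigoplus_{\theta\in\Theta} e_\theta\Pi_{k_\theta-1}$.

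I expect the only genuinely delicate point to be the directness step — the induction via the operators $D_\theta$ and the verification that $D_{\theta_0}$ is injective, not merely degree-preserving, on $e_\theta\Pi_k$ for $\theta\neq\theta_0$. The operator factorization and the recurrence-based dimension count are routine once the roles of the factors $z^m$ and $(z-\theta^{-1})^{k_\theta}$ have been cleanly separated.
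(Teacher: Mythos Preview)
Your argument is correct: the operator factorization gives the inclusion $\supseteq$, the degree-preserving action of $D_{\theta_0}$ on $e_\theta\Pi_k$ for $\theta\neq\theta_0$ yields directness by induction, and the two-sided recurrence with nonvanishing extreme coefficients pins down $\dim\ker(h*(\cdot))=K$. One cosmetic remark: in the directness step you should say that $D_{\theta_0}$ is a \emph{bijection} from $e_\theta\Pi_k$ to itself (degree-preserving on a finite-dimensional space implies this), since that is what lets you pull back from $\tilde p_\theta=0$ to $p_\theta=0$ after applying $D_{\theta_0}^{k_{\theta_0}}$.

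As for the comparison with the paper: there is nothing to compare against. Theorem~\ref{T:Kernel1D} is quoted as a classical result (with references to Jordan and Goldberg) and is not proved in the paper; it serves only as motivation for the multivariate Theorem~\ref{T:MainTheorem}. One could, of course, recover the one-variable statement by specializing the paper's machinery --- in one variable every multiplicity space $\cQ_\theta$ is $\Pi_{k_\theta-1}$, the map $\sigma_- L^{-1}\sigma_\theta$ preserves this space, and Propositions~\ref{P:MainTheorem1}--\ref{P:MainTheorem2} then give the kernel --- but that route passes through the $L$ operator, ideal quotients, and Hermite fundamental solutions, which is vastly heavier than your direct recurrence-and-dimension argument. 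Your proof is the natural elementary one and is exactly what the cited sources have in mind.
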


\noindent
Here, $\Pi_k$ denotes the vector space of all polynomials of degree at
most $k$, hence the \emph{multiplicity} of the zero at $\theta^{-1}$ directly
corresponds to the degree of the exponential polynomial space that
belongs to the kernel of the convolution operator. 

Our goal will be to give a complete analog of Theorem~\ref{T:Kernel1D} in
several variables, which, of course, will need a more careful
treatment of the (common) zeros of polynomials and in particular of their
multiplicities. Multiplicities of zeroes of polynomial ideals have
been considered for example in 
\cite{deBoorRon91,MarinariMoellerMora96}, but the main results are already
mentioned in \cite{GroebnerII}, where Gr\"obner refers to his
papers
\cite{groebner37:_ueber_macaul_system_bedeut_theor_differ_koeff,groebner39:_ueber_eigen_integ_differ_koeff},
where not only the concept of multiplicities is introduced and
clarified, but where he also solves the continuous counterpart of our
question, describing the kernels of partial differential operators.

Based on Gr\"obners multiplicity theory, we will state and prove the
counterpart of Theorem~\ref{T:Kernel1D} for zero dimensional ideals in
Section~\ref{sec:ConvKern}, while in Section~\ref{sec:SubdKern} we
briefly apply these results to also describe the kernels of stationary
subdivision operators in several variables. 

\section{Kernels of convolution operators}
\label{sec:ConvKern}
We begin by fixing some terminology. Let $\Pi = \RR [z] =
\RR[z_1,\dots,z_s]$ denote the ring of polynomials in $s$ variables
over $\RR$, and let $\deg f$ denote the \emph{total degree} of $f \in \Pi$. A
polynomial $f \in \Pi$ is called \emph{homogeneous} if it can be
written as 
$$
f (z) = \sum_{|\gamma| = \deg f} f_\gamma \, z^\gamma, \qquad z^\gamma
:= z_1^{\gamma_1} \cdots z_s^{\gamma_s},
$$
and we write $\Pi^0$ for all homogeneous polynomials, $\Pi_k$ for all
polynomials $f$ with $\deg f \le k$ and $\Pi_k^0$ for all homogeneous
$f$ with $\deg f = k$, $k \in \NN_0$. By $\Lambda (f) \in \Pi_{\deg
  f}^0$ we denote the homogeneous leading term of $f$, defined by $f -
\Lambda (f) \in \Pi_{\deg f - 1}$.
To a polynomial $q \in \Pi$ we
associated the constant coefficient partial difference operator
$$
q(D) = q \left( \frac{\partial}{\partial
    z_1},\dots,\frac{\partial}{\partial z_s} \right)
= \sum_{\alpha \in \ZZ^s} q_\alpha \frac{\partial^{|\alpha|}}{\partial
  z^\alpha}
= \sum_{\alpha \in \ZZ^s} q_\alpha D^\alpha,
$$
and call a subspace $\cP$ of $\Pi$ \emph{$D$--invariant} if $\Pi(D)
\cP = \cP$, that is, $q(D)p \in \cP$, $p \in \cP$, $q \in
\Pi$. Finally, we introduce an inner product $(\cdot,\cdot) : \Pi
\times \Pi \to \RR$ by setting
\begin{equation}
  \label{eq:InnerProdDef}
  (f,g) := ( f(D) g ) (0) = \sum_{\alpha \in \NN_0^s} \alpha! \, f_\alpha g_\alpha.
\end{equation}
This inner product was used in \cite{deBoorRon91} and also
in the construction of the \emph{least interpolant},
cf. \cite{deBoorRon92a}. I learned that it is sometimes called
``Bombieri inner product'' or ``Fisher inner product'' though
unfortunately I cannot provide references; also, Charles Dunkl
(private communication) mentioned that Calderon used this inner
product in the context harmonic polynomials. For our purposes here it will
turn out to be more useful than the ``canonical'' inner product $(f,g)
= \sum f_\alpha g_\alpha$ that gives rise to Macaulay's inverse
systems,
cf. \cite{groebner37:_ueber_macaul_system_bedeut_theor_differ_koeff,
  GroebnerII, Sauer01}. 

\subsection{$D$--invariant spaces}
\label{ssec:DInvar}
The identity
$$
( p(D)f,g ) = (f,pg), \qquad f,p,g \in \Pi
$$
is easily derived from (\ref{eq:InnerProdDef}) and directly yields the
following observation.

\begin{lemma}\label{L:InvarIdeal}
  A subspace $\cQ \subseteq \Pi$ is $D$--invariant if and only if
  $\cQ^\perp = \{ f \;:\; (\cQ,f) = 0 \}$ is an ideal.
\end{lemma}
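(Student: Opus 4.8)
The plan is to rewrite both conditions as orthogonality statements with respect to $(\cdot,\cdot)$ and then connect them through the adjoint relation $(p(D)f,g)=(f,pg)$ quoted above. First I would observe that ``$\cQ$ is $D$--invariant'' amounts to the single inclusion $\Pi(D)\cQ\subseteq\cQ$, because $1\in\Pi$ and $1(D)$ is the identity map already force the reverse inclusion; similarly ``$\cQ^\perp$ is an ideal'' amounts to $p\,g\in\cQ^\perp$ for all $p\in\Pi$ and $g\in\cQ^\perp$. The link between the two is the one-line computation $(q,p\,g)=(p(D)q,g)$ for $q\in\cQ$, $p\in\Pi$, $g\in\cQ^\perp$: it says that $p\,g$ is orthogonal to every $q\in\cQ$ precisely when $g$ is orthogonal to every $p(D)q$.

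For the implication ``$D$--invariant $\Rightarrow$ ideal'' this is immediate: for $g\in\cQ^\perp$ and $p\in\Pi$, each $q\in\cQ$ gives $(q,p\,g)=(p(D)q,g)=0$, since $p(D)q\in\Pi(D)\cQ\subseteq\cQ$ and $g\in\cQ^\perp$; hence $p\,g\in\cQ^\perp$. The converse uses the same identity in the other direction: if $\cQ^\perp$ is an ideal, then for $q\in\cQ$, $p\in\Pi$ and every $g\in\cQ^\perp$ we get $(p(D)q,g)=(q,p\,g)=0$, so $p(D)q\in(\cQ^\perp)^\perp$, and it then remains to identify $(\cQ^\perp)^\perp$ with $\cQ$.

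That double-annihilator step is where the only real care is needed. Positive definiteness of $(\cdot,\cdot)$ gives $\cQ\subseteq(\cQ^\perp)^\perp$ for free, but in the infinite-dimensional space $\Pi$ the reverse inclusion is not automatic for an arbitrary subspace (a hyperplane defined by a linear functional that no polynomial represents has trivial orthogonal complement), so I would invoke here that the $\cQ$ relevant in this paper are finite-dimensional. For such $\cQ$ the argument is routine: given $g\notin\cQ$, restrict to the finite-dimensional space $\cQ+\RR g$, on which the form is still positive definite, and split $g=g_0+g_1$ with $g_0\in\cQ$ and $0\neq g_1\perp\cQ$; then $g_1\in\cQ^\perp$ while $(g_1,g)=(g_1,g_1)>0$, so $g\notin(\cQ^\perp)^\perp$. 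This yields $(\cQ^\perp)^\perp=\cQ$, and together with the previous paragraph it gives $\Pi(D)\cQ\subseteq\cQ$, i.e. $D$--invariance of $\cQ$. In short, the whole proof is a short manipulation of the adjoint identity; the one point genuinely requiring attention, rather than routine bookkeeping, is the finite-dimensionality (equivalently, closedness under taking the double orthogonal complement) used in that last step.
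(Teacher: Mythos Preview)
Your argument is correct and follows exactly the route the paper intends: the paper's ``proof'' consists solely of recording the adjoint identity $(p(D)f,g)=(f,pg)$ and declaring that the lemma follows, so your expanded version is the natural elaboration of what is written there.

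One point deserves emphasis. Your caution about the double-annihilator step is well placed and in fact sharpens the paper: as stated, the lemma is \emph{not} true for arbitrary subspaces $\cQ\subseteq\Pi$. The hyperplane $\cQ=\{f:f(1,\dots,1)=0\}$ has $\cQ^\perp=\{0\}$ (since $(z^\alpha-z^\beta,g)=0$ for all $\alpha\neq\beta$ forces $\alpha!\,g_\alpha$ to be constant, hence zero), which is certainly an ideal, yet $\cQ$ is not $D$--invariant ($z_1-1\in\cQ$ but $D_1(z_1-1)=1\notin\cQ$). The paper silently restricts to finite-dimensional $\cQ$---the very next paragraph uses $\deg\cQ=\max\{\deg q:q\in\cQ\}$ and the spaces $\cQ_\theta$ appearing later are all finite dimensional---so your explicit invocation of finite dimensionality to secure $(\cQ^\perp)^\perp=\cQ$ is not pedantry but a genuine hypothesis that the paper's formulation omits.
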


\noindent
Based on Lemma~\ref{L:InvarIdeal} one can construct a homogeneous
basis for the $D$--invariant space $\cQ$ by successively constructing
bases for
$$
\cP_j = \left\{ f \in \Pi_j^0 \;:\; (f,\Lambda (\cQ^\perp) ) = 0
\right\}, \qquad j = 0,\dots,\deg \cQ := \max \{ \deg q \;:\; q \in
\cQ \},
$$
cf. \cite{Sauer01}. Since $\bigoplus_j \cP_j \equiv \Pi / \cQ^\perp$, it follows
that $\cP_0 + \cdots + \cP_{\deg \cQ} = \cQ$ and therefore $\cQ$ has a
homogeneous basis which will be denoted by $Q$. Since $(f,g) = 0$ if
$\deg f \neq \deg g$, we can moreover assume that $Q$ is a
\emph{orthonormal homogeneous basis}, that is,
\begin{equation}
  \label{eq:Qortho}
  (q,q') = \delta_{q,q'}, \qquad q,q' \in \cQ.
\end{equation}
Hence, any $f \in \cQ$ can be written as
\begin{equation}
  \label{eq:fOrthoExpansion}
  f = \sum_{q \in Q} (f,q) \, q = \sum_{q \in Q} ( q(D)f ) (0) \, q  
\end{equation}
from which we can conclude for $x,y \in \RR^s$ that
$$
f(x + y) = \sum_{q \in Q} (f(\cdot+y),q) \, q(x) = \sum_{q \in Q}
( q(D)f ) (y) \, q (x),
$$
hence, by symmetry,
\begin{equation}
  \label{eq:fx+yFormula}
  f(x + y) = \sum_{q \in Q} ( q(D)f ) (y) \, q (x)
  = \sum_{q \in Q} ( q(D)f ) (x) \, q (y).
\end{equation}
Note that (\ref{eq:fx+yFormula}) in particular implies that any
$D$--invariant space is shift invariant.



\subsection{Zero dimensional ideals}
\label{ssec:ZeroDim}
In several variables, a single convolution $h * c$ cannot be sufficient 
to have a finite dimensional kernel. Indeed, (\ref{eq:hetheta}) shows that
$h * e_\theta = 0$ for any zero $\theta^{-1}$ of $h^*$, which can be a
whole algebraic variety, hence
usually not even a countable set. Therefore, we emerge from a finite
set $H \subset \ell_{00} (\ZZ^s)$, consider the \emph{ideal}
$$
\Ideal{H^*} = \left\{ \sum_{h \in H} f_h \, h^* \;:\; f_h \in \Pi \right\}
$$
generated by $h^*$, $h \in H$, and request that the ideal is
\emph{zero dimensional}, that
is, there exists a \emph{finite} set $\Theta \subset \CC^s$ such that
$$
H^* (\Theta^{-1}) = 0, \qquad \mbox{i.e.,} \qquad h^* (\theta^{-1}) =
0,\quad h \in H, \theta \in \Theta.
$$
Since $h * c = 0$ implies $(g * h) * c = g*h*c = 0$ with $(g*h)^* =
g^* h^*$ for any finite
filter $g$, the kernel does not depend of the
generating set $H$ but of the ideal $\Ideal{H^*}$.

In Theorem~\ref{T:Kernel1D} we have seen that multiplicities of the
zeros play a fundamental role for the structure of the kernel. To
extend this to the case of several variable, we recall the
following classical description of the multiplicities of common zeroes
of ideals, see also \cite{deBoorRon91,MarinariMoellerMora96}.

\begin{theorem}[\cite{groebner37:_ueber_macaul_system_bedeut_theor_differ_koeff}]
  $\cI \subset \Pi$ is a zero dimensional ideal if and only if there
  exists a finite set $\mathrm{Z} \subset \CC^s$ and $D$--invariant
  subspaces $\cQ_\zeta$, $\zeta \in \mathrm{Z}$, such that
  $$
  f \in \cI \qquad \Leftrightarrow \qquad q(D) f (\zeta) = 0, \quad q
  \in \cQ_\zeta, \, \zeta \in \mathrm{Z}.
  $$
\end{theorem}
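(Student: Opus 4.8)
The statement is a classical theorem of Gröbner (Macaulay's inverse systems, in the "Bombieri inner product" normalization), so the plan is to assemble it from the $D$-invariance machinery already set up. First I would handle the \emph{local} case: suppose the zero set is a single point, which after a shift $z \mapsto z + \zeta$ we may take to be the origin, and suppose the radical of $\cI$ is the maximal ideal $\mathfrak{m} = \langle z_1,\dots,z_s\rangle$, i.e.\ $\cI$ is $\mathfrak{m}$-primary. Then $\Pi/\cI$ is a finite-dimensional $\RR$-vector space, so $\cI^{\perp}$ (orthogonal complement with respect to $(\cdot,\cdot)$) is finite-dimensional of the complementary dimension. By Lemma~\ref{L:InvarIdeal}, applied in the form ``$\cQ$ is $D$-invariant iff $\cQ^{\perp}$ is an ideal'' — but read in the reverse direction, using that $(\cdot,\cdot)$ restricted to $\Pi_k$ is nondegenerate and that $\cI = (\cI^{\perp})^{\perp}$ on each graded piece — the space $\cQ := \cI^{\perp}$ is $D$-invariant. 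This gives the description $f \in \cI \Leftrightarrow (f, q) = 0$ for all $q \in \cQ$, and the identity $(f,q) = (q(D)f)(0)$ from (\ref{eq:InnerProdDef}) rewrites this as $q(D)f(0) = 0$ for all $q \in \cQ$, which is exactly the asserted form at the single point $\zeta = 0$.

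Next I would reduce the general zero-dimensional case to this local one. Given a zero-dimensional $\cI$ with zero set $\mathrm{Z} = \{\zeta_1,\dots,\zeta_n\} \subset \CC^s$, one invokes the primary decomposition $\cI = \cI_1 \cap \cdots \cap \cI_n$ where $\cI_j$ is $\mathfrak{m}_{\zeta_j}$-primary (the radical of $\cI_j$ is the maximal ideal at $\zeta_j$); since distinct $\mathfrak{m}_{\zeta_j}$ are comaximal, the Chinese Remainder Theorem gives $\Pi/\cI \cong \bigoplus_j \Pi/\cI_j$, and correspondingly $f \in \cI \Leftrightarrow f \in \cI_j$ for every $j$. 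Applying the local result to each $\cI_j$ (after translating $\zeta_j$ to the origin) produces the $D$-invariant spaces $\cQ_{\zeta_j}$ and the stated equivalence. For the converse direction — that any such finite collection of $D$-invariant subspaces defines a zero-dimensional ideal — one checks directly that $\cI := \{ f : q(D)f(\zeta) = 0,\ q \in \cQ_\zeta,\ \zeta \in \mathrm{Z}\}$ is an ideal (this is where $D$-invariance is used: $p \cdot f$ satisfies the conditions because $q(D)(pf)(\zeta)$ expands, via the Leibniz rule, into a sum of terms $q'(D)f(\zeta)$ with $q' \in \cQ_\zeta$ by $D$-invariance, together with the shift-invariance consequence of (\ref{eq:fx+yFormula})), and that it has finite codimension because each local block is cut out by finitely many linear conditions and $\Pi/\mathfrak{m}_\zeta^{N}$ is finite-dimensional once $N$ exceeds the degrees occurring in $\cQ_\zeta$.

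The main obstacle is the local step — specifically, showing $\cQ = \cI^{\perp}$ is genuinely $D$-invariant rather than merely a vector space. Lemma~\ref{L:InvarIdeal} is stated as ``$\cQ$ $D$-invariant $\Leftrightarrow$ $\cQ^{\perp}$ is an ideal,'' so to run it backwards I need $(\cI^{\perp})^{\perp} = \cI$, which requires care: the inner product $(\cdot,\cdot)$ is nondegenerate on each $\Pi_k$ but the full $\Pi$ is infinite-dimensional, so biduality is not automatic. The clean way around this is to work modulo a large power $\mathfrak{m}_0^{N}$ (with $N$ chosen so $\mathfrak{m}_0^{N} \subseteq \cI$, possible since $\cI$ is $\mathfrak{m}_0$-primary), where everything is finite-dimensional and graded, so $(\cdot,\cdot)$ pairs $\Pi/\mathfrak{m}_0^{N}$ with itself nondegenerately; then $\cQ \subseteq \Pi_{N-1}$ is honestly the orthogonal complement of $\cI/\mathfrak{m}_0^N$, its perp is $\cI$ back again, and Lemma~\ref{L:InvarIdeal} applies verbatim. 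A secondary technical point is the passage to $\CC$: the inner product and $D$-invariance were set up over $\RR$, but the zeros $\zeta$ live in $\CC^s$, so one extends scalars throughout, noting that the real ideal $\cI$ and its complex primary components pair up in conjugate-closed families and the resulting $\cQ_\zeta$ may be genuinely complex — this bookkeeping is routine but should be stated.
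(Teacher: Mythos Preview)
The paper does not prove this theorem at all: it is quoted as a classical result of Gr\"obner, with citations to \cite{groebner37:_ueber_macaul_system_bedeut_theor_differ_koeff}, \cite{deBoorRon91}, and \cite{MarinariMoellerMora96}, and is used as background for the author's own results on difference operators. There is therefore no ``paper's own proof'' to compare against.

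That said, your outline is a sound reconstruction of the standard argument and is well adapted to the paper's setup. The local step---taking $\cQ = \cI^\perp$ for an $\mathfrak{m}_0$-primary ideal and invoking Lemma~\ref{L:InvarIdeal} in reverse---is correct once one has biduality, and your fix (pass to $\Pi/\mathfrak{m}_0^N \cong \Pi_{N-1}$, where the inner product is nondegenerate and finite-dimensional linear algebra applies) is exactly the right maneuver; note that $\mathfrak{m}_0^N \subseteq \cI$ forces $\cI^\perp \subseteq \Pi_{N-1}$ automatically, so the reduction loses nothing. The globalization via primary decomposition and CRT is standard, and your converse sketch is fine, though the Leibniz-rule remark could be sharpened: $D$-invariance of $\cQ_\zeta$ directly gives that $q(D)(pf)(\zeta)$ is a linear combination of values $q'(D)f(\zeta)$ with $q' \in \cQ_\zeta$, which is what closes $\cI$ under multiplication. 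The real/complex bookkeeping you flag is indeed a nuisance but, as you say, routine.
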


\noindent
In \cite{GroebnerII}, the dimension $\dim \cQ_\zeta$ of $\cQ_\zeta$ is
called the \emph{multiplicity} of the zero $\zeta$, but it will be
more appropriate here to work with the spaces $\cQ_\theta$ themselves.
The the dimension of
$\cQ_\zeta$ alone is not sufficient to fully describe the nature of
the zero is easily seen from the
the two examples
$$
\cQ_\zeta = \{ 1,x,y \}, \qquad \cQ_\zeta = \{ 1,x+y,(x+y)^2 \}
$$
of a triple zero in two variables.

\noindent
It is worthwhile to remark that generally the symbol $h^*$ is not a
polynomial but a Laurent polynomial, hence $h^* = (\cdot)^\alpha f$
for some $\alpha \in \ZZ^s$ and $f \in \Pi$. Since it is easily seen
that $\ker H$ is a shift invariant space, we can always shift the
impulse responses $h \in H$ such that $h^* \in \Pi$. However, one must
keep in mind that a ``spurious'' zero of $h^*$ at zero do not count
when considering $\ker H$; this is a well--known effect also in the
context of smoothness analysis of refinable functions, see
\cite{MoellerSauer04}.

\begin{definition}\label{D:ZeroDimFilt}
  A finite set $H \subset \ell_{00} (\ZZ^s)$ of impulse responses is
  called \emph{zero dimensional} if the ideal $\Ideal{H^*}$ is zero
  dimensional or, equivalently, if there exist a finite subset $\Theta
  \subset \CC_\times^s$ and finite dimensional $D$--invariant spaces
  $\cQ_\theta$, $\theta \in \Theta$, such that
  $$
  q(D) h^* (\theta^{-1}) = 0, \qquad q \in \cQ_\theta, \, \theta \in
  \Theta, \, h \in H.
  $$
\end{definition}

\subsection{Annihilation of exponential polynomials}
\label{ssec:Main}
In order to formulate the main results of this paper, we need some more
terminology. The \emph{partial difference operator} $\Delta^\alpha$,
acting on $\ell (\ZZ^s)$ is
recursively defined as
$$
\Delta^{\alpha + \epsilon_j} = (\tau^{\epsilon_j} - I ) \Delta^\alpha,
\qquad \alpha \in \NN_0^s, \quad j=1,\dots,s.
$$
We define an operator $L : \Pi \to \Pi$ as
\begin{equation}
  \label{eq:LOpDef}
  L f (x) = \sum_{|\gamma| \le \deg f} \frac{1}{\gamma!} \Delta^\gamma
  f (0) \, x^\gamma
\end{equation}
and note that $\Lambda (Lf) = \Lambda( f )$ as well as $\deg L f =
\deg f$. This immediately leads to the following observation.

\begin{lemma}\label{L:LLemma}
  $L$ is a degree preserving linear isomorphism $\Pi \to \Pi$ and
  $\Pi_k \to \Pi_k$ for any $k \in \NN$. In particular, there exists
  an inverse $L^{-1}$ on $\Pi$ as well as on $\Pi_k$, $k \in \NN_0$.
\end{lemma}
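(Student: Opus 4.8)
The plan is to lean on the two facts noted immediately before the statement: $L$ is manifestly linear, and $\Lambda(Lf) = \Lambda(f)$, whence also $\deg Lf = \deg f$. Linearity already gives that $L$ is a linear map $\Pi \to \Pi$, and since every monomial appearing in $Lf$ carries an exponent $\gamma$ with $|\gamma| \le \deg f$, we have $L(\Pi_k) \subseteq \Pi_k$ for every $k \in \NN_0$. So all that is really left is bijectivity, and I would obtain it from injectivity plus a dimension count on the finite-dimensional pieces $\Pi_k$.

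For injectivity: if $Lf = 0$ with $f \neq 0$, then $\deg f$ is a well-defined nonnegative integer and $\Lambda(f) = \Lambda(Lf) = 0$, contradicting $f \neq 0$; hence $\ker L = \{0\}$. Consequently, for each $k$ the restriction $L|_{\Pi_k}$ is an injective endomorphism of the finite-dimensional vector space $\Pi_k$, therefore an isomorphism $\Pi_k \to \Pi_k$, and in particular $(L|_{\Pi_k})^{-1}$ exists on $\Pi_k$.

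It then remains to glue the pieces: since $\Pi = \bigcup_{k \ge 0} \Pi_k$ and every $g \in \Pi$ already has a preimage inside whichever $\Pi_k$ contains it, $L : \Pi \to \Pi$ is surjective, hence an isomorphism. Its inverse agrees on each $\Pi_k$ with $(L|_{\Pi_k})^{-1}$ — this is consistent because the maps $L|_{\Pi_k}$ are mutual restrictions — so $L^{-1}$ is defined on all of $\Pi$ and restricts to an isomorphism of each $\Pi_k$, as claimed.

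There is no genuine obstacle here once $\Lambda(Lf) = \Lambda(f)$ is taken as given; the only point deserving a line of care is that injectivity by itself does not yield surjectivity on the infinite-dimensional space $\Pi$, which is precisely why the argument is routed through the filtration by the finite-dimensional subspaces $\Pi_k$. If one preferred to make the ingredient $\Lambda(Lf) = \Lambda(f)$ self-contained, I would observe that $\Delta^\gamma$ factors over the coordinates and that $\Delta_j^{\gamma_j} x_j^{\beta_j}$ evaluated at $0$ vanishes unless $\gamma_j \le \beta_j$ and equals $\gamma_j!$ when $\gamma_j = \beta_j$, so that for $|\gamma| = \deg f$ only the contribution $\beta = \gamma$ survives in $\Delta^\gamma f(0)$, giving the coefficient $\tfrac{1}{\gamma!}\,\gamma!\,f_\gamma = f_\gamma$.
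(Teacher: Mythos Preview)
Your argument is correct and is exactly what the paper has in mind: the lemma is stated there without proof, introduced only by ``This immediately leads to the following observation'' after noting $\Lambda(Lf)=\Lambda(f)$ and $\deg Lf=\deg f$. Your write-up simply supplies the details behind that ``immediately'', routing surjectivity through the finite-dimensional pieces $\Pi_k$, which is the natural way to make the observation rigorous.
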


\noindent
Next, we introduce the \emph{scaling operator} $\sigma_\theta$, $\theta \in
\CC_\times^s$, as
$$
\sigma_\theta f (z) := f (\theta z) := f (\theta_1 z_1,\dots,\theta_s
z_s)
$$
with the abbreviation $\sigma_- :=
\sigma_{(-1,\dots,-1)}$. 
For $\theta \in \CC_\times^s$ and a $D$--invariant subspace $\cQ_\theta
\subset \Pi$, we define
$$
\widehat \cQ_\theta = \sigma_\theta \cQ_\theta = \left\{ \sigma_\theta
  q \;:\; q \in \cQ_\theta \right\}, 
$$
and note that $\widehat \cQ_\theta$ is also $D$--invariant since
$$
p(D) q(\theta \cdot) = \sum_{\alpha \in \NN_0^s} p_\alpha
\theta^\alpha ( D^\alpha q ) (\theta \cdot), \qquad p = \sum_{|\alpha| \le
  \deg p} p_\alpha \, (\cdot)^\alpha,
$$
and $D^\alpha q$ can be expanded in terms of $\cQ_\theta$. Moreover,
we introduce to $\cQ_\theta$ the space
\begin{equation}
  \label{eq:PthetaDef}
  \cP_\theta := \sigma_- L \widehat \cQ_\theta = \Span \{ L^{-1}
  \sigma_\theta q \;:\; q \in Q_\theta \},
\end{equation}
where again $Q_\theta$ denotes an homogeneous orthonormal basis of
$\cQ_\theta$.

\begin{example}\label{Ex:QPSpaces}
  For the $D$--invariant space $\cQ_\theta = \Span \{
  1,(x+y),(x+y)^2 \}$ and $\theta = (\theta_1,\theta_2)$ with
  $\theta_1 \neq \theta_2$ we get $\widehat \cQ_\theta = \Span \{ 1,
  \theta_1 x + \theta_2 y, (\theta_1 x + \theta_2 y)^2 \} \neq
  \cQ_\theta$. A straightforward computation yields
  \begin{align*}
    L 1 & = 1 \\
    L ( \theta_1 x + \theta_2 y ) & = \theta_1 x + \theta_2 y \\
    L ( \theta_1 x + \theta_2 y )^2 & = ( \theta_1 x + \theta_2 y )^2
    + \theta_1^2 x + \theta_2^2 y,
  \end{align*}
  which shows that
  $$
  \cP_\theta = \Span \left\{ 1, \theta_1 x + \theta_2 y, ( \theta_1 x +
    \theta_2 y )^2 - ( \theta_1^2 x + \theta_2^2 y ) \right\}
  $$
  is not spanned by
  homogeneous polynomials, hence cannot be $D$--invariant as soon as
  $\theta_1 \neq \theta_2$. Moreover, $\cP_\theta$ is not
  $\sigma_-$ invariant in that case.
\end{example}

\noindent
Nevertheless, $\cP_\theta$ has a fundamental invariance property.

\begin{lemma}\label{L:PthetaShift}
  The space $\cP_\theta$ is shift invariant.
\end{lemma}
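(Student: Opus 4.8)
The plan is to unravel the definition $\cP_\theta = \sigma_- L \widehat\cQ_\theta$ and reduce shift invariance of $\cP_\theta$ to a structural property of $L$ and $\widehat\cQ_\theta$. Since $\sigma_-$ merely substitutes $z \mapsto -z$ and $\sigma_-^2 = I$, the map $\sigma_-$ is a linear isomorphism that conjugates the shift $\tau^{\epsilon_j}$ (i.e. $p \mapsto p(\cdot + \epsilon_j)$, extended to $\RR^s$ as the translation $p \mapsto p(\cdot + y)$) to the translation in the opposite direction. Hence $\cP_\theta$ is shift invariant if and only if $L\widehat\cQ_\theta$ is, so it suffices to prove that $L\cQ$ is shift invariant whenever $\cQ$ is $D$--invariant (then apply this to $\cQ = \widehat\cQ_\theta$, which is $D$--invariant as already noted in the text just before the lemma).

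The key step is therefore to understand how $L$ interacts with translation. I would first record, from the definition (\ref{eq:LOpDef}), that $L$ is the linear operator sending the monomial-type "Newton" structure to the monomial structure: concretely, $Lf(x) = \sum_{\gamma} \frac{1}{\gamma!}(\Delta^\gamma f)(0)\,x^\gamma$, and since $\Delta^\gamma$ commutes with every shift $\tau^\beta$ (all are polynomials in the commuting operators $\tau^{\epsilon_1},\dots,\tau^{\epsilon_s}$), one has $(\Delta^\gamma f)(y) = \big(\Delta^\gamma (f(\cdot+y))\big)(0)$ for integer $y$, and by polynomiality in $y$ this identity extends to all $y \in \RR^s$. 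The cleanest route is to exhibit $L$ itself as coming from an operator on $\Pi$: there is a linear operator $T$ (a "discretization/umbral" map) with $Lf = \sum_\gamma \frac{1}{\gamma!}\big((\Delta^\gamma f)(0)\big) x^\gamma$, and one checks the commutation relation $L\big(f(\cdot + y)\big) = (Lf)\big(\cdot + \Lambda_y\big)$ is false in general — instead the correct statement is that $L$ intertwines ordinary translation on the target with translation-by-$y$ composed with the evaluation structure on the source; the honest way is to show directly that for fixed $y\in\RR^s$ the map $f \mapsto L\big(f(\cdot+y)\big) - \big(Lf\big)(\cdot + \mu(y))$ lands in a space we can control, or more simply to use the factorization of $L$ through the Newton-polynomial basis. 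In fact the slick argument is: $L$ is the unique linear map with $L\big[\prod_j \binom{z_j}{m_j}\, m_j!\big]$... — rather than chase this, I would invoke that $L$ is degree preserving and that, by Lemma~\ref{L:LLemma}, $L$ and $L^{-1}$ exist on each $\Pi_k$, and then argue via the second description $\cP_\theta = \Span\{L^{-1}\sigma_\theta q : q\in Q_\theta\}$ is unhelpful for shifts; so the first description is the one to push.

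Concretely, the argument I would write: fix $f \in \widehat\cQ_\theta$ and $y\in\RR^s$; I claim $L\big(f(\cdot + y)\big)$ again lies in $L\widehat\cQ_\theta$. Using $(\Delta^\gamma f)(y) = \big(\Delta^\gamma f(\cdot+y)\big)(0)$ and the Taylor/finite-difference expansion $f(\cdot+y) = \sum_{\beta} \frac{1}{\beta!}\binom{\cdot}{\beta}\beta!\,(\Delta^\beta f)(y)$-type identity (the multivariate Newton forward-difference formula, valid since $f$ is a polynomial), one expresses $\Delta^\gamma\big(f(\cdot+y)\big)(0)$ as a finite linear combination, with coefficients depending polynomially on $y$, of the quantities $(\Delta^{\gamma'} f)(0)$ with $|\gamma'|\le|\gamma|$; equivalently $\Delta^\gamma\big(f(\cdot+y)\big)(0) = \big(Lf\big)$-coefficients shifted. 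The upshot is an explicit matrix identity $L\big(f(\cdot+y)\big) = M_y\, Lf$ where $M_y$ acts on coefficient vectors, block-lower-triangular in degree; but the crucial point is instead that $f \mapsto f(\cdot+y)$ already maps $\widehat\cQ_\theta$ into itself (since, by (\ref{eq:fx+yFormula}) and the remark after it, every $D$--invariant space is shift invariant, and $\widehat\cQ_\theta$ is $D$--invariant). Therefore $L\big(f(\cdot+y)\big) \in L\widehat\cQ_\theta$ for every $f\in\widehat\cQ_\theta$, i.e. $L\widehat\cQ_\theta$ is mapped into itself by $f\mapsto f(\cdot+y)$... — wait, that's the wrong direction: I need $\tau^\beta(Lf)\in L\widehat\cQ_\theta$, not $L(\tau^\beta f)$.

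So the real content, and the step I expect to be the main obstacle, is the commutation identity relating $\tau^\beta \circ L$ to $L \circ (\text{something preserving }\widehat\cQ_\theta)$. I would prove: for each $j$, $\tau^{\epsilon_j} L = L\,\tau^{\epsilon_j}$ on $\Pi$. This is plausible because $\Delta^\gamma$ commutes with $\tau^{\epsilon_j}$, so
$$
\big(\Delta^\gamma (\tau^{\epsilon_j} f)\big)(0) = \big(\tau^{\epsilon_j}\Delta^\gamma f\big)(0) = (\Delta^\gamma f)(\epsilon_j),
$$
and one must compare $\sum_\gamma \frac{1}{\gamma!}(\Delta^\gamma f)(\epsilon_j)\,x^\gamma$ with $\big(L f\big)(x+\epsilon_j) = \sum_\gamma \frac{1}{\gamma!}(\Delta^\gamma f)(0)\,(x+\epsilon_j)^\gamma$. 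These are equal precisely because the Newton expansion of $f$ at the point $\epsilon_j$ versus at $0$ differ exactly by the binomial shift in the monomial basis — this is the multivariate Newton forward-difference theorem, which says $f(x) = \sum_\gamma \frac{1}{\gamma!}(\Delta^\gamma f)(0)\,(x)_\gamma$ with falling-factorial-type terms, but here $L$ is defined with honest monomials $x^\gamma$, so the identity $\tau^{\epsilon_j}L = L\tau^{\epsilon_j}$ needs the precise combinatorics of $\Delta^\gamma$. I would carry out this verification by induction on $|\gamma|$ using the recursion $\Delta^{\gamma+\epsilon_j} = (\tau^{\epsilon_j}-I)\Delta^\gamma$ together with the generating-function / exponential-substitution trick (apply both sides to $e^{\lambda\cdot x}$, where $\Delta^\gamma$ becomes multiplication by $\prod_j(e^{\lambda_j}-1)^{\gamma_j}$ and $\tau^{\epsilon_j}$ becomes multiplication by $e^{\lambda_j}$), which reduces the whole claim to the scalar identity $\sum_\gamma \frac{1}{\gamma!}\prod_j(e^{\lambda_j}-1)^{\gamma_j} x^\gamma = \prod_j\big(1+(e^{\lambda_j}-1)\big)^{x_j}$ understood as formal power series — and this is just $\prod_j e^{\lambda_j x_j}$, trivially true. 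Granting $\tau^{\epsilon_j}L = L\tau^{\epsilon_j}$, we get $\tau^\beta L = L\tau^\beta$ for all $\beta$, hence $\tau^\beta\big(L\widehat\cQ_\theta\big) = L\big(\tau^\beta\widehat\cQ_\theta\big) \subseteq L\widehat\cQ_\theta$ because $\widehat\cQ_\theta$, being $D$--invariant, is shift invariant by (\ref{eq:fx+yFormula}). Finally, since $\sigma_-\tau^\beta\sigma_- = \tau^{-\beta}$ and $\cP_\theta = \sigma_- L\widehat\cQ_\theta$, we conclude $\tau^\beta\cP_\theta = \sigma_-\tau^{-\beta}L\widehat\cQ_\theta = \sigma_- L\,\tau^{-\beta}\widehat\cQ_\theta \subseteq \sigma_- L\widehat\cQ_\theta = \cP_\theta$, which is the assertion. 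The main obstacle is genuinely the commutation lemma $\tau^{\epsilon_j}L = L\tau^{\epsilon_j}$; everything else is formal.
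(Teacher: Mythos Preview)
Your approach has a genuine gap: the commutation $\tau^{\epsilon_j} L = L\,\tau^{\epsilon_j}$ is simply false. Take $s=1$ and $f(x)=x^2$. Then $\Delta^0 f(0)=0$, $\Delta f(0)=1$, $\Delta^2 f(0)=2$, so $Lf(x)=x+x^2$ and $(\tau L f)(x)=(x+1)+(x+1)^2=x^2+3x+2$; on the other side $(L\tau f)(x)=L\big((\cdot+1)^2\big)(x)=1+3x+x^2$. Your generating-function heuristic collapses because it identifies $\sum_\gamma \tfrac{1}{\gamma!}u^\gamma x^\gamma=e^{u\cdot x}$ with $(1+u)^x$; writing $u_j=e^{\lambda_j}-1$, your displayed ``scalar identity'' asserts $e^{\sum_j(e^{\lambda_j}-1)x_j}=e^{\sum_j\lambda_j x_j}$, which is false. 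In basis language, $L$ is exactly the change of basis $(\cdot)_\gamma\mapsto(\cdot)^\gamma$ from falling factorials to monomials, and this isomorphism does \emph{not} intertwine integer shifts with themselves.

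The paper's proof avoids commutation entirely. For $p=\sigma_-Lq$ with $q\in\widehat\cQ_\theta$, it expands $(-(x+y))^\gamma$ binomially and uses $\Delta^{\alpha+\beta}=\Delta^\alpha\Delta^\beta$ to obtain
\[
p(x+y)=\sigma_-L\,q_y(x),\qquad q_y:=\sum_{|\alpha|\le\deg q}\frac{(-y)^\alpha}{\alpha!}\,\Delta^\alpha q.
\]
The crucial point is that the ``conjugate shift'' on the $\widehat\cQ_\theta$ side is not $\tau^{\pm y}$ but $q\mapsto q_y$, and \emph{this} operator does preserve $\widehat\cQ_\theta$: the space is $D$--invariant, hence shift invariant by~(\ref{eq:fx+yFormula}), hence closed under every $\Delta^\alpha$. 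So the moment you flagged as ``the wrong direction'' really is the crux; the fix is not a commutation lemma but identifying the correct intertwiner $q\mapsto q_y$ and checking that it stays inside $\widehat\cQ_\theta$.
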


\begin{proof}
  Any $p \in \cP_\theta$ can be written as
  $$
  p(x) = \sum_{|\gamma| \le \deg q} \frac{1}{\gamma!} \Delta^\gamma q
  (0) (-x)^\gamma
  = \sum_{|\gamma| \le \deg q} (-1)^{|\gamma|} \frac{1}{\gamma!}
  \Delta^\gamma q (0) x^\gamma
  $$
  for some $q \in \widehat \cQ_\theta$, hence, since $\Delta^\gamma q
  (0) = 0$ for $|\gamma| > \deg q$
  \begin{align*}
    p(x+y) & = \sum_{\gamma \in \ZZ^s} (-1)^{|\gamma|}
    \frac{1}{\gamma!} \Delta^\gamma q (0) \sum_{\beta \le \gamma}
    {\gamma \choose \beta} x^\beta y^{\gamma-\beta} \\
    & = \sum_{\gamma \in \ZZ^s} (-1)^{|\gamma|} \frac{1}{\gamma!} \sum_{\beta \le
      \gamma} {\gamma \choose \beta} \Delta^\beta
    \Delta^{\gamma-\beta} q (0) x^\beta y^{\gamma-\beta} \\
    & = \sum_{\alpha,\beta \in \ZZ^s} \frac{1}{(\alpha+\beta)!}
    {\alpha+\beta \choose \beta} \Delta^\beta \Delta^\alpha q (0)
    (-x)^\beta (-y)^\alpha \\ 
    & = \sum_{\beta \in \ZZ^s} \frac{(-x)^\beta}{\beta!} \Delta^\beta
    \sum_{\alpha \in \ZZ^s} \frac{1}{\alpha!} \Delta^\alpha q(0)
    (-y)^\alpha
    = \sum_{\beta \in \ZZ^s} \frac{(-x)^\beta}{\beta!} \Delta^\beta
    q_y (0) \\
    & = \sigma_- L q_y (x)
  \end{align*}
  and since
  $$
  q_y := \sum_{|\alpha| \le \deg q} \frac{(-y)^\alpha}{\alpha!} \Delta^\alpha q
  $$
  belongs to $\widehat \cQ_\theta$ as this space is $D$--invariant,
  we can conclude that $p(x+y) \in \cP_\theta$
  as well.
\end{proof}

\noindent
Recalling an argument from
\cite{groebner39:_ueber_eigen_integ_differ_koeff}, we note that the
shift invariance of $\cP_\theta$ implies that for any $f \in
\cP_\theta$ we have
$$
f (x+y) = \sum_{p \in P} g(y) \, p (x), \qquad P_\theta = \sigma_- L
\sigma_\theta Q,
$$
and since, by symmetry, also $g \in \cP_\theta$, we get that
\begin{equation}
  \label{eq:PthetaExpand}
  f(x+y) = \sum_{p,p' \in P} a_{p,p'} (f) \, p(x) p'(y), \qquad
  a_{p,p'} (f) = a_{p',p} (f) \in \RR.
\end{equation}
In particular, any basis element $p \in P_\theta$ can be written as
$$
p (x+y) = \sum_{p' \in P_\theta} g_{p,p'} (y) \, p' (x), \qquad
g_{p,p'} = \sum_{\widetilde p \in P_\theta} a_{\widetilde p,p'} (p) \,
\widetilde p,
$$
or, in matrix notation, $P_\theta (\cdot + y) = G(y) P_\theta$ with
$G(0) = I$; moreover, it was it was shown in
\cite{groebner39:_ueber_eigen_integ_differ_koeff} that $\det G(y) =
1$, $y \in \RR^s$.
After defining the \emph{unimodular} polynomial matrices
\begin{equation}
  \label{eq:gqqDef}
  \widetilde G := \left[ g_{q,q'} := g_{\sigma_- L \sigma_\theta q,
      \sigma_- L \sigma_\theta q'} \;:\; q,q' \in Q_\theta \right] \in
  \Pi^{Q_\theta \times Q_\theta}
\end{equation}
and
\begin{equation}
  \label{eq:gpqDef}
  \widehat G := \left[ g_{p,q} := g_{p, \sigma_- L \sigma_\theta
    q} \;:\; p \in P_\theta, \, q \in Q_\theta \right] \in \Pi^{P_\theta \times Q_\theta}
\end{equation}
which only differ in their way of indexing,
we have all tools at hand to prove the next result.

\begin{proposition}\label{P:MultAnnihil}
  Let $\theta \in \CC_\times^s$ and $\cQ_\theta$ be a finite
  dimensional $D$--invariant subspace of $\Pi$. Then the following
  statements are equivalent:
  \begin{enumerate}
  \item\label{it:PMultAnnihil1}
    $h * ( \cP_\theta e_\theta ) = 0$, where $\cP_\theta$ is defined
    in (\ref{eq:PthetaDef}).
  \item\label{it:PMultAnnihil2}
    $q(D) h^* (\theta^{-1}) = 0$, $q \in \cQ_\theta$.
  \end{enumerate}
\end{proposition}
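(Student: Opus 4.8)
The plan is to expand $h*(p\,e_\theta)$ using the shift invariance of $\cP_\theta$, reduce the vanishing to a condition on $h^*$ at $\theta^{-1}$ phrased through the Euler operators $\vartheta_j:=z_j\,\partial/\partial z_j$, and then translate that condition into the $D$--operator language of \eqref{it:PMultAnnihil2} via the combinatorics linking $z_j\partial_j$ and $\partial_j$ — which is exactly what the operator $L$ encodes.

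\emph{Step 1 (expansion).} For $p\in\cP_\theta$ and $\beta\in\ZZ^s$,
$$
\bigl(h*(p\,e_\theta)\bigr)(\beta)=\sum_{\alpha}h(\alpha)\,p(\beta-\alpha)\,\theta^{\beta-\alpha}=\theta^\beta\sum_{\alpha}h(\alpha)\,\theta^{-\alpha}\,p(\beta-\alpha).
$$
Since $\cP_\theta$ is shift invariant (Lemma~\ref{L:PthetaShift}), the matrix identity $P_\theta(\cdot+y)=G(y)P_\theta$ underlying \eqref{eq:PthetaExpand} applies and yields $p(\beta-\alpha)=\sum_{p'\in P_\theta}g_{p,p'}(-\alpha)\,p'(\beta)$, so
$$
\bigl(h*(p\,e_\theta)\bigr)(\beta)=\theta^\beta\sum_{p'\in P_\theta}\Bigl(\sum_{\alpha}h(\alpha)\,\theta^{-\alpha}\,g_{p,p'}(-\alpha)\Bigr)p'(\beta).
$$
The sequences $\beta\mapsto\theta^\beta p'(\beta)$, $p'\in P_\theta$, are linearly independent (they are $e_\theta$ times a basis of $\cP_\theta$), hence \eqref{it:PMultAnnihil1} holds iff $\sum_\alpha h(\alpha)\theta^{-\alpha}g_{p,p'}(-\alpha)=0$ for all $p,p'\in P_\theta$, equivalently (reindexing the columns as in $\widehat G$) iff $\sum_\alpha h(\alpha)\theta^{-\alpha}\widehat G_{p,q'}(-\alpha)=0$ for all $p\in P_\theta$, $q'\in Q_\theta$.

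\emph{Steps 2--3 (from filter sums to the symbol, and back to $D$).} For a polynomial $r=\sum_\gamma r_\gamma(\cdot)^\gamma$ one has $\sum_\alpha h(\alpha)\theta^{-\alpha}r(\alpha)=\sum_\gamma r_\gamma(\vartheta^\gamma h^*)(\theta^{-1})=(r\langle\vartheta\rangle h^*)(\theta^{-1})$, where $\vartheta^\gamma:=\vartheta_1^{\gamma_1}\cdots\vartheta_s^{\gamma_s}$ and $r\langle\vartheta\rangle:=\sum_\gamma r_\gamma\vartheta^\gamma$ (well defined since the $\vartheta_j$ commute), because $\vartheta^\gamma h^*(z)=\sum_\alpha h(\alpha)\alpha^\gamma z^\alpha$. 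Applying this with $r=\sigma_-g_{p,p'}$ and using linearity together with the fact that, by shift invariance, the entries $g_{p,p'}$ span $\cP_\theta$, statement \eqref{it:PMultAnnihil1} becomes: $(r\langle\vartheta\rangle h^*)(\theta^{-1})=0$ for all $r\in\sigma_-\cP_\theta$. Now the classical expansion $\vartheta^\gamma=\sum_{\beta}\bigl(\prod_j S(\gamma_j,\beta_j)\bigr)z^\beta D^\beta$ in Stirling numbers $S$ of the second kind, evaluated at $z=\theta^{-1}$, delivers a degree--preserving linear isomorphism $\phi:\Pi\to\Pi$ — expressible through $L$ (via Newton's forward difference formula, which is precisely the passage between the monomial and the falling--factorial basis) and the scaling $\sigma_{\theta^{-1}}$ — such that $(r\langle\vartheta\rangle F)(\theta^{-1})=(\phi(r)(D)F)(\theta^{-1})$ for all $r,F$. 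Since both $r\mapsto r\langle\vartheta\rangle|_{\theta^{-1}}$ and $\mu\mapsto\mu(D)|_{\theta^{-1}}$ are injective (Taylor coefficients at $\theta^{-1}\in\CC_\times^s$ separate), \eqref{it:PMultAnnihil1} is equivalent to $q(D)h^*(\theta^{-1})=0$ for all $q$ in the subspace $\phi(\sigma_-\cP_\theta)$. It then remains to check, unwinding $\widehat\cQ_\theta=\sigma_\theta\cQ_\theta$ and the definition of $\cP_\theta$ through $L$, that $\phi(\sigma_-\cP_\theta)=\cQ_\theta$; this gives \eqref{it:PMultAnnihil1}$\Leftrightarrow$\eqref{it:PMultAnnihil2}.

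\emph{Expected main obstacle.} Step 1 and the passage $\sum_\alpha h(\alpha)\theta^{-\alpha}r(\alpha)=(r\langle\vartheta\rangle h^*)(\theta^{-1})$ are routine once shift invariance is invoked. The real work is the last part: pinning down the $\vartheta\leftrightarrow D$ change $\phi$ \emph{exactly} (the Stirling/Newton bookkeeping, and keeping track of $\sigma_-$, $\sigma_\theta$, and $\theta^{-1}$ versus $\theta$), and then verifying that $\phi$ precisely inverts the $L$--twist that was deliberately built into $\cP_\theta=\sigma_-L\widehat\cQ_\theta$, so that $\phi(\sigma_-\cP_\theta)$ collapses back onto $\cQ_\theta$ itself rather than onto some other $D$--invariant space of the same dimension (the Example shows these really differ once $\cP_\theta$ fails to be spanned by homogeneous polynomials). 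This matching step is exactly where the ``intricate'' link between the polynomial space $\cP_\theta$ and Gr\"obner's multiplicity space $\cQ_\theta$ is forced, and it is the place to be most careful; a secondary point needing a short argument of its own is the claim that the entries of $\widehat G$ (equivalently the $g_{p,p'}$) span $\cP_\theta$.
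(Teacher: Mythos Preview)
Your plan is correct, and the obstacle you flag dissolves once you note that your change-of-operator map is exactly $\phi=\sigma_{\theta^{-1}}\circ L$: from $\Delta^\gamma(\cdot)^\alpha(0)=\gamma!\prod_jS(\alpha_j,\gamma_j)$ one reads off $L(\cdot)^\alpha=\sum_\gamma\bigl(\prod_jS(\alpha_j,\gamma_j)\bigr)(\cdot)^\gamma$, which is precisely the Stirling transform in your $\vartheta^\gamma=\sum_\beta\bigl(\prod_jS(\gamma_j,\beta_j)\bigr)z^\beta D^\beta$ up to the scaling by $\theta^{-\beta}$. Hence $\phi(\sigma_-\cP_\theta)=\sigma_{\theta^{-1}}L\,L^{-1}\sigma_\theta\cQ_\theta=\cQ_\theta$ on the nose. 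Your secondary point, that the $g_{p,p'}$ span $\cP_\theta$, follows by evaluating $P_\theta(\cdot+y)=G(y)P_\theta$ at $0$.

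The paper's proof is closely related but more direct. Rather than expanding $p(\beta-\alpha)$ in monomials (which produces the Euler operators $\vartheta^\gamma$ and then forces the Stirling conversion back to $D^\gamma$), it expands $p'=\sigma_-p$ in \emph{falling factorials} via Newton's formula \eqref{eq:NewtonFormula}; since $D^\gamma z^\beta=(\beta)_\gamma z^{\beta-\gamma}$, the filter sum $\sum_\beta h(\beta)(\beta)_\gamma\theta^{-\beta}$ is immediately $\theta^{-\gamma}(D^\gamma h^*)(\theta^{-1})$, and one obtains $h*(p\,e_\theta)(\alpha)=\theta^\alpha(L\tau^{-\alpha}p')(\theta^{-1}D)\,h^*(\theta^{-1})$ in a single line --- the operator $L$ surfaces precisely because it is the map that replaces falling-factorial by monomial coefficients. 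The shift expansion \eqref{eq:PthetaExpand} then rewrites this as $\theta^\alpha\sum_{q\in Q_\theta}p_q(\alpha)\,q(D)h^*(\theta^{-1})$, from which \eqref{it:PMultAnnihil2}$\Rightarrow$\eqref{it:PMultAnnihil1} is immediate. For \eqref{it:PMultAnnihil1}$\Rightarrow$\eqref{it:PMultAnnihil2} the paper invokes Gr\"obner's unimodularity $\det\widetilde G\equiv1$ to invert the resulting system, whereas your span argument is a clean, self-contained substitute that avoids that citation. In short: both proofs rest on the shift invariance of $\cP_\theta$, but the paper works in the falling-factorial basis to bypass the Euler--Stirling detour entirely, and trades your span observation for the determinant identity.
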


\begin{proof}
  We first note that the Newton formula for the Lagrange interpolant,
  \cite{IsaacsonKeller66,Steffensen27},
  yields for any polynomial $f \in \Pi$ that
  \begin{equation}
    \label{eq:NewtonFormula}
    f = \sum_{|\gamma| \le \deg f} \frac{1}{\gamma!} \Delta^\gamma f(0) \,
    (\cdot)_\gamma, \qquad
    (x)_\gamma = \prod_{j=1}^s \prod_{k=0}^{\gamma_j-1} (x_j - k),
  \end{equation}
  hence, for $p \in \cP_\theta$, $p = \sigma_- p'$, $p' \in L^{-1}
  \widehat \cQ_\theta$, and $\alpha \in \ZZ^s$,
  \begin{align*}
    h * ( p e_\theta ) (\alpha) & = \sum_{\beta \in \ZZ^s} h(\beta)
    p'( \beta - \alpha ) \, \theta^{\alpha - \beta} \\
    & = \theta^\alpha \sum_{\beta \in \ZZ^s} h(\beta) \sum_{|\gamma| \le
      \deg p} \frac{1}{\gamma!} \Delta^\gamma (\tau^{-\alpha} p')(0)
    (\beta)_\gamma \theta^{-\beta} \\
    & = \theta^\alpha \sum_{|\gamma| \le \deg p} \frac{1}{\gamma!} \Delta^\gamma
    (\tau^{-\alpha} p')(0) \, \theta^{-\gamma} \, \left( D^\gamma \sum_{\beta \in \ZZ^s}
      h(\beta) (\cdot)^\beta \right) (\theta^{-1}) \\
    & = \theta^\alpha ( L \tau^{-\alpha} p' )( \theta^{-1} D ) h^* (\theta^{-1}).
  \end{align*}
  By (\ref{eq:PthetaExpand})
  it follows that
  \begin{align}
    \nonumber
    h * ( p e_\theta ) (\alpha) & = \theta^\alpha \sum_{p_1 \in
      P_\theta} \sum_{q \in Q_\theta} a_{p_1,\sigma_- L \sigma_\theta
      q} (p) p_1 (\alpha) ( L L^{-1} \sigma_\theta q )( \theta^{-1} D
    ) h^* (\theta^{-1}) \\
    \label{eq:hpethta}
    & = \theta^\alpha \sum_{q \in Q_\theta} \left( \sum_{p_1 \in
        P_\theta} a_{p_1,\sigma_- L \sigma_\theta
        q} (p) \, p_1 (\alpha) \right) \, q(D) h^* (\theta^{-1})
    \\
    \nonumber
    & = \theta^\alpha \sum_{q \in Q_\theta} p_q (\alpha) \,  q(D) h^*
    (\theta^{-1}),
  \end{align}
  with
  $$
  p_q := \sum_{p_1 \in
    P_\theta} a_{p_1,\sigma_- L \sigma_\theta
    q} (p) \, p_1 \in \cP_\theta.
  $$
  Consequently, (\ref{it:PMultAnnihil2}) implies
  (\ref{it:PMultAnnihil1}) while
  for the converse we only need to set $p := \sigma_- L^{-1}
  \sigma_\theta q$ for $q \in Q_\theta$ to get, according to
  (\ref{eq:gqqDef}),
  $$
  0 = h * ( p e_\theta ) = e_\theta \sum_{q' \in Q_\theta} g_{q,q'} \,
  q' (D) h^* (\theta^{-1}), \qquad q \in Q_\theta,
  $$
  which gives $0 = e_\theta \widetilde G \, Q(D) h^* (\theta^{-1})$ and since
  $\det \widetilde G \equiv 1$, we can conclude that
  (\ref{it:PMultAnnihil1}) implies (\ref{it:PMultAnnihil2}) as well.
\end{proof}

\begin{remark}
  Like in the univariate case, the local space to be annihilated is a
  exponential polynomial space $\cP_\theta e_\theta$, however, it is
  generally not the same as the multiplicity space $\cQ_\theta$, see
  Example~\ref{Ex:QPSpaces}.
\end{remark}

\subsection{Kernels of convolutions}
Now we have all the tools at hand to give the main result of this
paper.

\begin{theorem}\label{T:MainTheorem}
  If $H$ is a zero dimensional set of impulse responses with zero set
  $\Theta^{-1}$ and multiplicities $\cQ_\theta$, $\theta \in \Theta$,
  respectively, then 
  \begin{equation}
    \label{eq:KernelFormulaGen}
    \ker H = \bigoplus_{\theta \in \Theta} \cP_\theta \, e_\theta,
    \qquad \cP_\theta := \sigma_- L^{-1} \sigma_\theta \cQ_\theta.
  \end{equation}
\end{theorem}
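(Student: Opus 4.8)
The plan is to prove the two inclusions in (\ref{eq:KernelFormulaGen}) by a dimension argument: establish $\bigoplus_{\theta\in\Theta}\cP_\theta e_\theta\subseteq\ker H$ directly, compute the dimension of the left-hand side, and then bound $\dim\ker H$ from above by the same number, so that the inclusion is forced to be an equality. The first inclusion is essentially already in hand: for $\theta\in\Theta$ and $h\in H$, Definition~\ref{D:ZeroDimFilt} gives $q(D)h^*(\theta^{-1})=0$ for all $q\in\cQ_\theta$, which is condition (\ref{it:PMultAnnihil2}) of Proposition~\ref{P:MultAnnihil}, hence $h*(\cP_\theta e_\theta)=0$; letting $h$ run through $H$ yields $\cP_\theta e_\theta\subseteq\ker H$, and therefore $\sum_{\theta\in\Theta}\cP_\theta e_\theta\subseteq\ker H$.

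Next I would record that this sum is direct, which is just the classical linear independence of exponential polynomial sequences with pairwise distinct bases: if $\sum_{\theta\in\Theta}p_\theta e_\theta=0$ in $\ell(\ZZ^s)$ with $p_\theta\in\Pi$ and the $\theta$ pairwise distinct, then every $p_\theta$ vanishes --- one peels off one base at a time using suitable products of the operators $\tau^{\epsilon_j}-\theta_j I$, or one restricts to a generic discrete line and quotes the univariate case. Since $p\mapsto pe_\theta$ is injective and $\sigma_-$, $\sigma_\theta$ and $L^{-1}$ (the latter by Lemma~\ref{L:LLemma}) are linear bijections of $\Pi$, it follows that $\dim\bigoplus_{\theta\in\Theta}\cP_\theta e_\theta=\sum_{\theta\in\Theta}\dim\cP_\theta=\sum_{\theta\in\Theta}\dim\cQ_\theta$.

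The core of the proof is the matching bound $\dim\ker H\le\sum_{\theta\in\Theta}\dim\cQ_\theta$, which I would obtain by duality. Identifying $\ell(\ZZ^s)$ with the algebraic dual of the Laurent polynomial ring $R=\CC[z_1^{\pm1},\dots,z_s^{\pm1}]$ via $c\leftrightarrow(z^\alpha\mapsto c(\alpha))$, a short computation on monomials shows that $h*c$ is the functional $p\mapsto\langle c,\,h^*(z^{-1})\,p\rangle$, so $c\in\ker H$ precisely when $c$ annihilates the ideal $\check\cI:=\langle h^*(z^{-1}):h\in H\rangle\subseteq R$; hence $\ker H\cong(R/\check\cI)^{*}$. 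The automorphism $z_j\mapsto z_j^{-1}$ of $R$ maps the extended ideal $\Ideal{H^*}\,R$ onto $\check\cI$, and since $\Ideal{H^*}$ is zero dimensional with zero set $\Theta^{-1}$, the algebra $R/\check\cI$ is finite dimensional with variety $\Theta$ (it is isomorphic to $R/\Ideal{H^*}R$). Decomposing it into its local components, using that localization does not change the local multiplicity, and invoking Gr\"obner's identification of $\dim\cQ_\theta$ with the local multiplicity of $\Ideal{H^*}$ at $\theta^{-1}$, one gets $\dim\ker H=\dim_\CC R/\check\cI=\sum_{\theta\in\Theta}\dim\cQ_\theta$. (Passing to $R$ has the bonus of automatically discarding a possible spurious common zero of the $h^*$ on a coordinate hyperplane.) Comparing with the previous paragraph, the inclusion ``$\supseteq$'' is between finite dimensional spaces of the same dimension, hence an equality.

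The step I expect to be the real obstacle is this last dimension bookkeeping: one must check that the prescribed $D$--invariant spaces $\cQ_\theta$ genuinely realize the local multiplicities (Gr\"obner's theory), that the passage to the Laurent ring and the ensuing localizations leave these multiplicities intact so that the extraneous hyperplane zeros drop out exactly, and that $R/\check\cI$ is really finite dimensional so that dualizing preserves dimension. Everything else --- the first inclusion and the independence of exponentials --- is either already proved above or entirely standard.
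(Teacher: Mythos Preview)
Your argument is correct and takes a genuinely different route from the paper's. For the inclusion $\supseteq$ you do exactly what the paper does (its Proposition~\ref{P:MainTheorem1}, resting on Proposition~\ref{P:MultAnnihil}). For the converse, however, the paper argues by induction on $\#\Theta$ (Proposition~\ref{P:MainTheorem2}): given $c\in\ker H$ and a distinguished $\theta'\in\Theta$, it convolves $c$ with a basis of $\cI_{\{\theta'\}}$ built from Hermite fundamental solutions, applies the induction hypothesis to land in $\bigoplus_{\theta\ne\theta'}\cP_\theta e_\theta$, and then solves explicitly for the coefficients $s_\theta$ using the unimodular matrices $\widehat G_\theta$, with the remainder shown to lie in $\cP_{\theta'}e_{\theta'}$. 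You bypass this construction entirely: identifying $\ell(\ZZ^s)$ with the algebraic dual of the Laurent ring and reading $\ker H$ as the annihilator of $\check\cI$ reduces everything to the single algebraic fact $\dim_\CC R/\check\cI=\sum_\theta\dim\cQ_\theta$, after which the already-established inclusion between equal-dimensional spaces forces equality. What your route buys is brevity and a transparent conceptual picture --- the kernel is literally the dual of a finite quotient algebra, and the spurious hyperplane zeros mentioned before Definition~\ref{D:ZeroDimFilt} drop out automatically on passage to $R$; what the paper's route buys is constructiveness --- formula~(\ref{eq:sthetafinal}) actually produces the components $s_\theta$ of a given $c\in\ker H$ --- and it stays entirely within elementary polynomial interpolation, never invoking localization or the dimension formula for zero-dimensional quotient rings. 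The point you flag as the ``real obstacle'' is indeed the one place that needs care, but it is standard: the Hermite functionals $f\mapsto q(D)f(\theta^{-1})$ are linearly independent (ideal interpolation, which the paper itself uses), so $\dim\Pi/\Ideal{H^*}=\sum_\theta\dim\cQ_\theta$, and since every $\theta^{-1}$ lies in $\CC_\times^s$ the extension to $R$ preserves each local length.
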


\noindent
The proof of Theorem~\ref{T:MainTheorem} is split into the following
two propositions.

\begin{proposition}\label{P:MainTheorem1}
  With the assumptions of Theorem~\ref{T:MainTheorem} we have that
  \begin{equation}
    \label{eq:KernelFormulaGen1}
    \ker H \supseteq \bigoplus_{\theta \in \Theta} \cP_\theta \, e_\theta.
  \end{equation}
\end{proposition}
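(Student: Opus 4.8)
The plan is to reduce the multi-generator statement to the single-convolution case already handled in Proposition~\ref{P:MultAnnihil}, and then to handle the direct-sum claim by a separation argument on the exponentials. First I would fix $\theta \in \Theta$ and an arbitrary $h \in H$. By the definition of a zero dimensional set of impulse responses (Definition~\ref{D:ZeroDimFilt}), we have $q(D)h^*(\theta^{-1}) = 0$ for every $q \in \cQ_\theta$, which is exactly condition~(\ref{it:PMultAnnihil2}) of Proposition~\ref{P:MultAnnihil}. Hence condition~(\ref{it:PMultAnnihil1}) holds, i.e. $h * (\cP_\theta e_\theta) = 0$. Since this is true for every $h \in H$, we get $\cP_\theta e_\theta \subseteq \ker H$, and therefore $\sum_{\theta \in \Theta} \cP_\theta e_\theta \subseteq \ker H$. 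Note this step also silently uses the identification of the two descriptions of $\cP_\theta$: the one in (\ref{eq:PthetaDef}), $\cP_\theta = \sigma_- L \widehat\cQ_\theta$, versus the one in (\ref{eq:KernelFormulaGen}), $\cP_\theta = \sigma_- L^{-1}\sigma_\theta\cQ_\theta$ — these agree because $\widehat\cQ_\theta = \sigma_\theta\cQ_\theta$ and, crucially, $L^{-1}$ and $L$ give the same span here since $\widehat\cQ_\theta$ is $D$-invariant hence spanned by homogeneous pieces on which $L$ acts... actually I would just be careful and note that what Proposition~\ref{P:MultAnnihil} proves is $h*(\cP_\theta e_\theta)=0$ for $\cP_\theta$ as in (\ref{eq:PthetaDef}), and the equality of the two formulas for $\cP_\theta$ should be recorded as a small lemma or remark before invoking it; I expect it follows from $\Lambda(Lf) = \Lambda(f)$ and a degree-induction showing $L^{-1}\sigma_\theta\cQ_\theta$ and $L\sigma_\theta\cQ_\theta$ have the same dimension and the same homogeneous leading terms, hence the same span when $\cQ_\theta$ is $D$-invariant.

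The remaining point is that the sum is \emph{direct}. I would argue that distinct exponential-polynomial summands $\cP_\theta e_\theta$ have trivial pairwise (indeed mutual) intersection: suppose $\sum_{\theta \in \Theta} p_\theta(\alpha)\,\theta^\alpha = 0$ for all $\alpha \in \ZZ^s$, with $p_\theta \in \cP_\theta$. Since the $\theta \in \Theta \subset \CC_\times^s$ are distinct, the exponential sequences $e_\theta$ are linearly independent over the ring of polynomial sequences — this is the standard multivariate fact that finitely many exponential polynomials $\sum_\theta p_\theta e_\theta$ vanish identically on $\ZZ^s$ only if each $p_\theta \equiv 0$. One clean way to see this: restrict to a generic line $\alpha = n v$, $n \in \ZZ$, $v \in \ZZ^s$ chosen so that the numbers $\theta^v$ are pairwise distinct (possible since the $\theta$'s are distinct and $\CC_\times^s$ has enough room), reducing to the univariate statement that $\sum_\theta \tilde p_\theta(n)(\theta^v)^n \equiv 0$ forces each $\tilde p_\theta \equiv 0$; then vary $v$ to recover $p_\theta \equiv 0$ as a polynomial in $s$ variables. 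This gives $p_\theta e_\theta = 0$ for each $\theta$, hence the sum is direct.

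The main obstacle, I expect, is not either half above but the bookkeeping in between: making the invocation of Proposition~\ref{P:MultAnnihil} fully rigorous when the two stated formulas for $\cP_\theta$ differ by $L$ versus $L^{-1}$. If the intended meaning is that these genuinely coincide, I would insert the identification as indicated; if not, one must check that Proposition~\ref{P:MultAnnihil}'s proof — which only uses that $h*(pe_\theta)(\alpha) = \theta^\alpha (L\tau^{-\alpha}p')(\theta^{-1}D)h^*(\theta^{-1})$ and then expands via shift-invariance of $\cP_\theta$ — goes through verbatim with $\cP_\theta = \sigma_- L^{-1}\sigma_\theta\cQ_\theta$, since the only properties of $\cP_\theta$ actually used are its shift-invariance (Lemma~\ref{L:PthetaShift}, which should be restated for this $\cP_\theta$) and the unimodularity $\det\widetilde G \equiv 1$. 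Everything else — the containment and the directness — is then routine.
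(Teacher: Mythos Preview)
Your approach is essentially the paper's: both reduce to Proposition~\ref{P:MultAnnihil}, the paper merely re-expanding the computation (\ref{eq:hpethta}) in matrix form as $h*\sum_\theta p_\theta e_\theta = \sum_\theta e_\theta\, p_\theta^T \widehat G_\theta\, Q_\theta(D)h^*(\theta^{-1})$ and then using $Q_\theta(D)h^*(\theta^{-1})=0$. You actually go beyond the paper in two respects --- you argue the directness of the sum explicitly (the paper's proof is silent on this point), and you correctly flag the $L$ versus $L^{-1}$ discrepancy between (\ref{eq:PthetaDef}) and (\ref{eq:KernelFormulaGen}), which is a genuine inconsistency in the paper's notation rather than a gap in your reasoning.
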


\begin{proof}
  We identify $p_\theta \in \cP_\theta$, $\theta \in \Theta$, with
  its coefficient vector with respect to the basis $P_\theta$, and write
  $$
  p_\theta = p_\theta^T P := \sum_{p \in P_\theta} p_{\theta,p} \, p.
  $$
  Expanding (\ref{eq:hpethta}) further, we obtain
  \begin{align*}
    \lefteqn{
      h * \sum_{\theta \in \theta} p_\theta e_\theta
      = \sum_{\theta \in \Theta} \sum_{p \in P_\theta} \sum_{q \in
        q_\theta} a_{p,\sigma_- L \sigma_\theta q} ( p_\theta ) p \,
      q(D) h^* (\theta^{-1} )
    } \\
    & = \sum_{\theta \in \Theta} \sum_{p,p' \in P_\theta} \sum_{q \in
      q_\theta} p_{\theta,p'} a_{p,\sigma_- L \sigma_\theta q} ( p'
    ) p \, q(D) h^* (\theta^{-1} )
    =  \sum_{\theta \in \Theta} e_\theta \, p_\theta^T \, \widehat G_\theta \,
    Q_\theta (D) h^* (\theta^{-1}).
  \end{align*}
  Since $Q_\theta (D)
  h^* (\theta^{-1}) = 0$ by assumption, (\ref{eq:KernelFormulaGen1}) follows.
\end{proof}

\begin{proposition}\label{P:MainTheorem2}
  With the assumptions of Theorem~\ref{T:MainTheorem} we have that
  \begin{equation}
    \label{eq:KernelFormulaGen2}
    \ker H \subseteq \bigoplus_{\theta \in \Theta} \cP_\theta \, e_\theta.
  \end{equation}
\end{proposition}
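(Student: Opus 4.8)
The plan is to deduce the reverse inclusion \eqref{eq:KernelFormulaGen2} from Proposition~\ref{P:MainTheorem1} by a dimension count. Since $\cP_\theta$ is the image of $\cQ_\theta$ under a composition of linear isomorphisms of $\Pi$ (the scalings $\sigma_\theta$ and $\sigma_-$ together with the operator $L$ of Lemma~\ref{L:LLemma}), we have $\dim\cP_\theta=\dim\cQ_\theta$; and since exponential polynomials with pairwise distinct exponential factors $e_\theta$ are linearly independent, the sum $\bigoplus_{\theta\in\Theta}\cP_\theta e_\theta$ is genuinely direct, of dimension $\sum_{\theta\in\Theta}\dim\cQ_\theta$. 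Hence it suffices to show that $\ker H$ is finite dimensional with $\dim\ker H=\sum_{\theta\in\Theta}\dim\cQ_\theta$; equality of finite dimensions then upgrades the inclusion of Proposition~\ref{P:MainTheorem1} to \eqref{eq:KernelFormulaGen}.

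For the dimension of $\ker H$ I would use a duality argument. Identify a sequence $c\in\ell(\ZZ^s)$ with the functional $z^\alpha\mapsto c(\alpha)$ on the Laurent polynomial ring $\Lambda:=\CC[z_1^{\pm1},\dots,z_s^{\pm1}]$, so that $\ell(\ZZ^s)$ becomes the full algebraic dual $\Lambda^*$. From $(h*c)(\mu)=\sum_{\alpha}h(\alpha)c(\mu-\alpha)=\langle c,\,z^\mu h^*(z^{-1})\rangle$ one reads off that $c\in\ker H$ if and only if $c$ annihilates the Laurent ideal $J$ generated by $\{h^*(z^{-1}):h\in H\}$; thus $\ker H\cong(\Lambda/J)^*$. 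The automorphism $z_j\mapsto z_j^{-1}$ of $\Lambda$ carries $J$ onto the ideal generated by the symbols $h^*$, $h\in H$, which---after the harmless shift that places every $h^*$ in $\Pi$---is $\Ideal{H^*}\Lambda$. Because $H$ is zero dimensional with zero set $\Theta^{-1}\subset\CC_\times^s$, no coordinate $z_j$ vanishes on $V(\Ideal{H^*})$, so $z_1\cdots z_s$ is a unit modulo $\Ideal{H^*}$ and localization leaves the quotient untouched: $\Lambda/J\cong\Lambda/\Ideal{H^*}\Lambda\cong\Pi/\Ideal{H^*}$. Since $\Ideal{H^*}$ is zero dimensional this quotient is finite dimensional, and $\dim\ker H=\dim\bigl(\Pi/\Ideal{H^*}\bigr)$.

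It then remains to identify $\dim(\Pi/\Ideal{H^*})$ with $\sum_{\theta\in\Theta}\dim\cQ_\theta$: this is the classical statement that the coordinate ring of a zero dimensional ideal splits, by the Chinese Remainder Theorem, into local rings whose dimensions are exactly the multiplicities $\dim\cQ_\theta$ of the zeros, which is built into the multiplicity theorem recalled just before Definition~\ref{D:ZeroDimFilt}. I expect the genuine work to be concentrated in the identity $\dim\ker H=\dim(\Pi/\Ideal{H^*})$---in particular the point that a ``spurious'' zero of the symbols at the origin does not inflate the dimension, i.e. that $\Ideal{H^*}$ is saturated at $z_1\cdots z_s$, which is where the hypothesis $\Theta\subset\CC_\times^s$ really enters. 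A more self-contained variant avoiding localization would be: show directly that $\ker H$ is finite dimensional (each univariate elimination polynomial $p_j(z_j)\in\Ideal{H^*}$ satisfies $p_j(0)\neq0$ and hence yields a two-sided recurrence in the $j$th coordinate, so every $c\in\ker H$ is pinned down by its values on a finite box), then use shift invariance of $\ker H$ to split it into the generalized joint eigenspaces $e_\theta P_\theta$ of the commuting shifts $\tau_1,\dots,\tau_s$, and finally identify $P_\theta$ with $\cP_\theta$ by inserting each inclusion $e_\theta P_\theta\subseteq\ker H$ into the computation from the proof of Proposition~\ref{P:MultAnnihil} and invoking the maximality of $\cQ_\theta$ among $D$-invariant spaces $\cQ$ with $q(D)h^*(\theta^{-1})=0$ for all $q\in\cQ$, $h\in H$.
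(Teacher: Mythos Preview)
Your argument is correct and takes a genuinely different route from the paper's proof. The paper proceeds by induction on $\#\Theta$: for $\Theta' = \Theta \cup \{\theta'\}$ it passes to the quotient ideal $\cI_\Theta = \cI_{\Theta'} : \cI_{\{\theta'\}}$, chooses a basis $\{f_{\theta,q}\}$ of $\cI_{\{\theta'\}}$ consisting of fundamental solutions of the Hermite interpolation problem at the remaining points $\Theta$, and then, for a given $c \in \ker H$, explicitly constructs $s_\theta \in \cP_\theta$ (via the unimodular matrices $\widehat G_\theta$ and a symmetry identity for $h'*h*c$) so that $c - \sum_{\theta \in \Theta} s_\theta e_\theta \in \ker H_{\theta'}$, finishing by the single-point case. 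Your dimension count, by contrast, is nonconstructive: it identifies $\ker H$ with the algebraic dual of the Laurent coordinate ring $\Lambda / \Ideal{H^*(z^{-1})}$, observes that localizing at $z_1\cdots z_s$ is harmless because $\Theta^{-1} \subset \CC_\times^s$, and then invokes the Chinese Remainder decomposition (equivalently, Gr\"obner's multiplicity theorem) to match dimensions with $\bigoplus_\theta \cP_\theta e_\theta$. The paper's route buys an explicit formula for the components $s_\theta$ in terms of the Hermite fundamental solutions, which may be of independent interest; your route is shorter, makes the role of the zero-dimensionality hypothesis transparent, and sidesteps the somewhat delicate manipulation of the matrices $\widehat G_\theta$. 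Your second sketch---finite dimensionality from two-sided recurrences furnished by univariate elimination polynomials, followed by the joint generalized eigenspace decomposition of the commuting shifts $\tau_1,\dots,\tau_s$---is also viable and closer in spirit to the classical one-variable argument.
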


\begin{proof}
  We use induction on $\# \Theta$ where the case $\# \Theta = 1$ is
  covered by Proposition~\ref{P:MultAnnihil}.

  To advance the induction hypothesis, let $\Theta' = \Theta \cup
  \{\theta'\}$ and assume that the result has been proved for $\#
  \Theta$. With
  $$
  \cI_\Theta := \left\{ f \in \Pi \;:\; q(D) f (\theta^{-1}) = 0,\, q
    \in \cQ_\theta,\, \theta \in \Theta \right\}
  $$
  and any basis $H_\Theta^*$ of $\cI_\Theta$ we get the quotient ideal
  representation 
  \begin{equation}
    \label{eq:Quotideal}
    \Ideal{H_\Theta^*} = \cI_{\Theta} : \cI_{\{\theta'\}}  =
    \cI_{\Theta'} : \cI_{\{\theta'\}} = \Ideal{H_{\Theta'}^*} :
    \cI_{\{\theta'\}}
  \end{equation}
  as well as (\ref{eq:KernelFormulaGen1}). Let $H_{{\theta'}}^*$ be any
  basis of $\cI_{\{\theta'\}}$, then
  (\ref{eq:Quotideal}) can be rephrased as
  $$
  h * c \in \ker H_\Theta, \qquad h \in H_{\theta'}, \, c \in \ker H.
  $$
  Using the vector $\hb = \left[ h \;:\; h \in H_{\theta'} \right]$, this
  can be written by the induction hypothesis
  (\ref{eq:KernelFormulaGen1}) as
  \begin{equation}
    \label{eq:VecConvFun}
    \hb * c = \sum_{\theta \in \Theta} \pb_\theta \, e_\theta, \qquad
    \pb_\theta = \left[ p_{\theta,h} \;:\; h \in H_{\theta'} \right] \in
    \cP_\theta^{H_{\theta'}}.  
  \end{equation}
  We want to find $s_\theta \in \cP_\theta$, $\theta \in \Theta$, such that
  $$
  c = \sum_{\theta \in \Theta} s_\theta \, e_\theta
  $$
  satisfies (\ref{eq:VecConvFun}).
  To that end, we use a special basis $\hb$, consisting of the
  polynomials $f_{\theta,q} \in \cI_{\{\theta'\}}$. $q \in Q_\theta$,
  $\theta \in \Theta$ such that
  $$
  \widetilde q (D) f_{\theta,q} (\widetilde \theta^{-1}) =
  \delta_{q,\widetilde q}
  \delta_{\theta,\widetilde \theta}, \qquad \widetilde q \in
  Q_{\widetilde \theta}, \, \widetilde \theta \in \Theta,
  $$
  and a basis $H_{\Theta'}$ of $\cI_{\Theta'}$.
  The polynomials $f_{\theta,q}$ exist since the associated Hermite interpolation
  problem is an ideal one, cf. \cite{boor05:_ideal,Sauer06a}, and they
  are fundamental solutions for this problem. Since
  any element of $\cI_{\{\theta'\}}$ can be expressed as the sum of
  the Hermite interpolant and an element from $\cI_{\Theta'}$, this is
  a proper basis for the ideal $\cI_{\{\theta'\}}$. Let us again write
  $$
  \pb_\theta = \sum_{p \in P_\theta} \pb_{\theta,p} \, p \qquad
  \mbox{and} \qquad s_\theta = \sum_{p \in P_\theta} s_{\theta,p} \,
  p, \qquad \pb_{\theta,p} \in \RR^{H_{\theta'}}, \quad s_{\theta,p}
  \in \RR,
  $$
  as well as $s_\theta^T = [ s_{\theta,p} : p \in P_\theta ]$ and
  $p_{\theta,h}^T = [ p_{\theta,h,p} : p \in P_\theta ]$,
  respectively, for the row vectors of the coefficients,
  then the same computation as in the proof of
  Proposition~\ref{P:MainTheorem2} yields for $h \in H_\theta$
  $$
  h * \sum_{\theta \in \theta} s_\theta e_\theta
  =  \sum_{\theta \in \Theta} e_\theta s_\theta^T \, \widehat G_\theta \,
  Q_\theta (D) h^* (\theta^{-1}),
  $$
  and also gives by symmetry, for $h,h' \in H_{\theta'}$,
  \begin{align*}
    \lefteqn{
      h' * h * c = h' * \sum_{\theta \in \Theta} p_{\theta,h} e_\theta
    }\\
    & = \sum_{\theta \in \Theta} e_\theta p_{\theta,h}^T \widehat G_\theta \,
    Q_\theta (D) {h'}^* (\theta^{-1})
    = \sum_{\theta \in \Theta} e_\theta p_{\theta,h'}^T \widehat G_\theta \,
    Q_\theta (D) {h}^* (\theta^{-1}),
  \end{align*}
  that is,
  \begin{equation}
    \label{eq:hh'Ident}
    p_{\theta,h}^T \widehat G_\theta \,
    Q_\theta (D) {h'}^* (\theta^{-1}) = p_{\theta,h'}^T \widehat G_\theta \,
    Q_\theta (D) {h}^* (\theta^{-1}),
  \end{equation}
  which immediately gives that $p_{\theta,h} = 0$ whenever $Q_\theta
  (D) h^* (\theta^{-1}) = 0$, while for $h^* = f_{\theta,q}$ and ${h'}^* =
  f_{\theta,q'}$ we get
  \begin{equation}
    \label{eq:hh'Ident2}
    p_{\theta,h}^T \widehat G_\theta \delta_{q'} = p_{\theta,h'}^T \widehat
    G_\theta \delta_q, \qquad \delta_q := \left[ \delta_{q,q'} \;:\;
      q' \in Q_\theta \right].
  \end{equation}
  Setting $h^* = f_{\theta,q}$ and $P_\theta = \left[ \sigma_- L
    \sigma_\theta q \;:\; q \in Q_\theta \right]$, the requirement
  that the $s_\theta$ satisfy (\ref{eq:VecConvFun}) can be expressed
  by (\ref{eq:hh'Ident2}) as
  \begin{align*}
    s_\theta^T \widehat G \delta_q & = p_{\theta,f_{\theta,q}} = p_{\theta,h}^T
    \widehat G \, P_\theta (0)
    = \sum_{q' \in Q_\theta} (\sigma_- L
    \sigma_\theta q') (0) \, p_{\theta,f_{\theta,q}}^T \widehat G \delta_{q'} \\
    & = \sum_{q' \in Q_\theta} (\sigma_- L
    \sigma_\theta q') (0) \, p_{\theta,f_{\theta,q'}}^T \widehat G \delta_{q},
  \end{align*}
  from which it follows that
  \begin{equation}
    \label{eq:sthetafinal}
    s_\theta = \sum_{q' \in Q_\theta} (\sigma_- L \sigma_\theta q')
    (0) \, p_{\theta,f_{\theta,q'}}^T 
  \end{equation}
  guarantees is a solution for (\ref{eq:VecConvFun}). Since
  any two solutions $c,c'$ of (\ref{eq:VecConvFun}) must satisfy
  $\hb * (c-c') = 0$, it follows that
  $$
  c - c' \in \ker H_{\theta'}, \qquad {i.e.} \qquad
  c - c' = s_{\theta'} \, e_{\theta'},
  $$
  again by Proposition~\ref{P:MultAnnihil}. In other words, $c \in \ker H$
  implies that 
  $$
  c = \sum_{\theta \in \Theta'} s_{\theta'} \, e_{\theta'}, \qquad
  s_\theta \in \cP_\theta, \, \theta \in \Theta',
  $$
  which advances the induction hypothesis and completes the proof.
\end{proof}

\noindent
Theorem~\ref{T:MainTheorem} is the direct generalization of
Theorem~\ref{T:Kernel1D} to the case of several variables. The main
difference is that the $D$--invariant space $\cQ_\theta$ that
describes the multiplicity of the common zeros of the symbol is
mapped to the shift invariant space $\cP_\theta$ that describes which
polynomials to multiply to the exponential $e_\theta$. As
Example~\ref{Ex:QPSpaces} shows, these spaces need not coincide at
all, though they have same dimension, hence the same \emph{scalar}
multiplicity.
Nevertheless, the kernel space depends directly on the zeros and
their multiplicity and the bases of the two spaces can even be chosen
in such a way that they have they same homogeneous leading forms.

There is, however, an important special case, namely, when the $\cQ_\theta$ are
spanned by monomials, more precisely, a lower set of monomials:
$$
(\cdot)^\alpha \in \cQ_\theta \qquad \Rightarrow \qquad
(\cdot)^\beta \in \cQ_\theta, \quad \beta \le \alpha.
$$
In this case, $\widehat \cQ_\theta = \cQ_\theta$ and $L \cQ_\theta =
\cQ_\theta$, hence $\cP_\theta = \cQ_\theta$. This holds true in
particular for the case of \emph{zeros of order $k$} or \emph{fat
  points}, which is defined as 
$\cQ_\theta = \Pi_{k_\theta}$, $k_\theta \in \NN_0$. Since in one
variable multiplicities are always fat points, the discrepancy between
$\cQ_\theta$ and $\cP_\theta$ is indeed a truly multivariate phenomenon.

\subsection{Eigenvectors of convolutions}
\label{ssec:EigenVec}
A simple application of Theorem~\ref{T:MainTheorem} is to find
\emph{eigensequences} of convolution operators. Suppose that $H
\subset \ell_{00} (\ZZ^s)$ is again a finite set of impulse responses
and assume that there exist $\lambda_h \in \CC$ and $\alpha_h \in
\NN_0^s$ such that
\begin{equation}
  \label{eq:EigenProp}
  h * c = \lambda_h \, c (\cdot + \alpha_h), \qquad h \in H.
\end{equation}
This is equivalent to
$$
( h (\cdot + \alpha_h) - \lambda_h \delta ) * c = 0, \qquad h
\in H
$$
and thus depends on the zeros of the (Laurent) ideal
$$
\Ideal{ z^{-\alpha_h} h^* (z) - \lambda_h ) \;:\; h \in H} = \Ideal{
  h^* (z) - \lambda_h z^{\alpha_h} \;:\; h \in H}.
$$
Hence, also the eigensequences of convolution operators can be only
exponential polynomials.

\begin{corollary}
  If $\Ideal{h^* (z) - \lambda z^{\alpha_h} \;:\; h \in H}$ is zero
  dimensional with zeros $\Theta^{-1} \subset \CC_\times^s$ and
  respective multiplicities $\cQ_\theta$, then the solutions of
  (\ref{eq:EigenProp}) are $\bigotimes_\theta \cP_\theta e_\theta$ and
  the conditions on $h^*$ are
  $$
  q(D) h^* ( \theta^{-1} ) = \lambda_h ( q(D) (\cdot)^{\alpha_h})
  (\theta^{-1}), \qquad q \in \cQ_\theta, \, \theta \in \Theta, \, h
  \in H.
  $$
\end{corollary}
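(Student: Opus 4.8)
The plan is to reduce the eigenvector equation to an ordinary kernel equation and then quote Theorem~\ref{T:MainTheorem}. First I would rewrite the eigenproperty $h*c = \lambda_h\, c(\cdot + \alpha_h)$ as a homogeneous convolution equation. Since $c(\cdot+\alpha_h) = \tau^{\alpha_h} c = \delta(\cdot+\alpha_h) * c$, where $\delta$ is the unit impulse, we have $(h - \lambda_h\,\delta(\cdot+\alpha_h)) * c = 0$, and the symbol of $\delta(\cdot+\alpha_h)$ is $z^{\alpha_h}$; hence the new impulse response $\widetilde h := h - \lambda_h\,\delta(\cdot+\alpha_h)$ has symbol $\widetilde h^*(z) = h^*(z) - \lambda_h z^{\alpha_h}$. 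Thus $c$ solves (\ref{eq:EigenProp}) for all $h\in H$ if and only if $c \in \ker \widetilde H$, where $\widetilde H := \{\widetilde h : h\in H\}$. By hypothesis the ideal $\Ideal{h^*(z)-\lambda_h z^{\alpha_h} : h\in H} = \Ideal{\widetilde H^*}$ is zero dimensional with zero set $\Theta^{-1}$ and multiplicities $\cQ_\theta$; note that all zeros lie in $\CC_\times^s$ because a zero at a coordinate hyperplane would make the Laurent-to-polynomial normalization (the shift discussed after the theorem of \cite{groebner37:_ueber_macaul_system_bedeut_theor_differ_koeff}) change the problem, so this is exactly the setting of Definition~\ref{D:ZeroDimFilt}.

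Next I would simply invoke Theorem~\ref{T:MainTheorem} applied to $\widetilde H$: it yields immediately
$$
\{\,c : (\ref{eq:EigenProp}) \text{ holds}\,\} = \ker \widetilde H = \bigoplus_{\theta\in\Theta} \cP_\theta\, e_\theta, \qquad \cP_\theta = \sigma_- L^{-1}\sigma_\theta \cQ_\theta,
$$
which is the first assertion of the corollary (the paper writes $\bigotimes$ but means the direct sum $\bigoplus$ as in (\ref{eq:KernelFormulaGen})). For the stated conditions on $h^*$, I would spell out what ``$\widetilde H$ is zero dimensional with multiplicities $\cQ_\theta$'' means via Definition~\ref{D:ZeroDimFilt}: it is equivalent to $q(D)\widetilde h^*(\theta^{-1}) = 0$ for all $q\in\cQ_\theta$, $\theta\in\Theta$, $h\in H$. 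Now $q(D)$ is linear, so $q(D)\widetilde h^* = q(D)h^* - \lambda_h\, q(D)(\cdot)^{\alpha_h}$, and evaluating at $\theta^{-1}$ gives precisely
$$
q(D) h^*(\theta^{-1}) = \lambda_h\,\big(q(D)(\cdot)^{\alpha_h}\big)(\theta^{-1}), \qquad q\in\cQ_\theta,\ \theta\in\Theta,\ h\in H,
$$
as claimed.

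The only point that needs a little care — and the closest thing to an obstacle — is the bookkeeping around whether $h^*$ is a genuine polynomial or only a Laurent polynomial, and correspondingly whether $\widetilde h^* = h^* - \lambda_h z^{\alpha_h}$ sits in $\Pi$ after a common monomial shift. As the remark after Definition~\ref{D:ZeroDimFilt} explains, one clears denominators by multiplying by a suitable $z^\beta$; since $\ker H$ (and the eigenspace problem) is shift invariant, this does not change the solution set, and one only has to be sure not to introduce a spurious zero at a coordinate hyperplane into the list $\Theta$. Beyond that the argument is a purely formal translation and Theorem~\ref{T:MainTheorem} does all the work; there is no genuine new difficulty, which is why the result is stated as a corollary.
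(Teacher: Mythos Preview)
Your proposal is correct and mirrors exactly the paper's own approach: the text immediately preceding the corollary rewrites (\ref{eq:EigenProp}) as a homogeneous convolution equation, passes to the Laurent ideal $\Ideal{h^*(z) - \lambda_h z^{\alpha_h} : h \in H}$, and then Theorem~\ref{T:MainTheorem} together with Definition~\ref{D:ZeroDimFilt} yields both the kernel description and the displayed conditions on $h^*$. One tiny slip worth fixing: the symbol of $\delta(\cdot+\alpha_h)$ is $z^{-\alpha_h}$, not $z^{\alpha_h}$, but since monomials are units in the Laurent polynomial ring this does not change the ideal or the zero set in $\CC_\times^s$, so the conclusion is unaffected.
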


\noindent
The situation is particularly simple if no shifts are involved as then
$q(D) ( h^* - \lambda_h ) (\theta^{-1})$ yields the conditions
\begin{equation}
  \label{eq:SimpleEigen}
  h^* (\theta^{-1}) = \lambda_h, \qquad q(D) h^* (\theta^{-1}) = 0,
  \quad q \in \cQ_\theta,\, \deg q > 0.
\end{equation}

\section{Kernels of subdivision operators}
\label{sec:SubdKern}
As a final application of Theorem~\ref{T:MainTheorem} we have a brief
look at the kernels of \emph{subdivision operators} in several
variables. To that end, let $\Xi \in \ZZ^{s \times s}$ be a
\emph{expanding matrix} which means that all eigenvalues of $\Xi$ are
larger than one in modulus, or, equivalently, that $\| \Xi^{-k} \| \to
0$ as $k \to \infty$. A \emph{stationary subdivision operator} $S_a$ with
scaling matrix $\Xi$ and finitely supported \emph{mask} $a \in \ell_{00}
(\ZZ^s)$ acts on $\ell (\ZZ^s)$ in the convolution--like way
\begin{equation}
  \label{eq:SubdOpDiv}
  S_a c = \sum_{\alpha \in \ZZ^s} a (\cdot - \Xi \alpha) \, c(\alpha),
  \qquad c \in \ell (\ZZ^s).
\end{equation}
To analyze the kernels of such operators, we need a little bit more
terminology. By $E_\Xi := \Xi [0,1)^s \cap \ZZ^s$ we denote the set of
coset representers for $\ZZ^s / \Xi \ZZ^s$, i.e.,
$$
\ZZ^s = \bigcup_{\xi \in E_\Xi} \xi + \Xi \ZZ^s.
$$
Similarly, $E_\Xi' := \Xi^T [0,1)^s \cap \ZZ^s$ stands for the
representers of $\ZZ^s / \Xi^T \ZZ^s$.

An important tool will be the \emph{subsymbols} of $a$, defined as
$$
a_\xi^* (z) = \sum_{\alpha \in \ZZ^s} a(\xi + \Xi \alpha) z^\alpha,
\qquad \xi \in E_\Xi, \, z \in \CC_\times^s.
$$
It is easily seen that symbol and subsymbols are related via 
\begin{equation}
  \label{eq:SymbSubSymb}
  a^* (z) = \sum_{\xi \in E_\Xi} z^\xi \, a_\xi^* (z^\Xi)
\end{equation}
and
\begin{equation}
  \label{eq:SymbSubSymb2}
  a_\xi^* (z^\Xi) = \frac{1}{|\det \Xi|} \sum_{\xi' \in E_\Xi'}
  e^{-2\pi i \xi^T \Xi^{-T} \xi'} a^* ( e^{-2\pi i \Xi^{-T} \xi'} z ),
\end{equation}
cf. \cite{Sauer11:_shear_multir_multip_refin}. Here, $z^\Xi = (
z^{\xi_1},\dots,z^{\xi_s} )$, where the $\xi_j$ are the columns of
$\Xi$, i.e., $\Xi = [ \xi_1 \dots \xi_s ]$.
Splitting the requirement $S_a c = 0$ modulo $\Xi$, we get 
$$
0 = S_a c (\xi + \Xi \alpha) = \sum_{\beta \in \ZZ^s} a( \xi + \Xi
(\alpha - \beta) c (\beta) = a_\xi * c, \qquad \xi \in \Xi,
$$
from which the following conclusion can be drawn.

\begin{corollary}\label{C:CommonSubdiv}
  Suppose that the ideal $\Ideal{a_\xi^* \;:\; \xi \in E_\Xi}$ is zero
  dimensional with zeros $\Theta^{-1}$ and respective multiplicities
  $\cQ_\theta$. Then $\ker S_a = \bigoplus_{\theta \in \Theta}
  \cP_\theta e_\theta$.
\end{corollary}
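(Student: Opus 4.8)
The plan is to reduce Corollary~\ref{C:CommonSubdiv} to Theorem~\ref{T:MainTheorem} by observing that the subdivision operator $S_a$ annihilates $c$ if and only if a \emph{finite family} of convolution operators annihilates $c$ simultaneously. Indeed, splitting $S_a c = 0$ according to the cosets $\xi + \Xi \ZZ^s$, as displayed just before the statement, gives $a_\xi * c = 0$ for every $\xi \in E_\Xi$. Hence $\ker S_a = \ker H$ where $H := \{ a_\xi \;:\; \xi \in E_\Xi \} \subset \ell_{00}(\ZZ^s)$, a finite set of impulse responses whose symbols are exactly the subsymbols $a_\xi^*$. The only subtlety is that $a_\xi^*$ is in general a Laurent polynomial rather than a polynomial; but, as remarked in the excerpt, $\ker H$ is shift invariant, so we may replace each $a_\xi$ by a suitable shift so that all $a_\xi^* \in \Pi$ without changing the kernel, and a spurious common factor $z^\alpha$ does not contribute any zero in $\CC_\times^s$.

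First I would make the identification $\ker S_a = \ker\{a_\xi \;:\; \xi \in E_\Xi\}$ precise, reproducing the coset computation and noting that the equation $a_\xi * c = 0$ for all $\xi$ is genuinely equivalent to $S_a c = 0$ (both directions are immediate from $\ZZ^s = \bigcup_{\xi \in E_\Xi} \xi + \Xi\ZZ^s$). Second, I would invoke the hypothesis that $\Ideal{a_\xi^* \;:\; \xi \in E_\Xi}$ is zero dimensional, which is precisely Definition~\ref{D:ZeroDimFilt} applied to $H = \{a_\xi : \xi \in E_\Xi\}$, so that $H$ is a zero dimensional set of impulse responses with zero set $\Theta^{-1}$ and multiplicity spaces $\cQ_\theta$. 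Third, I would apply Theorem~\ref{T:MainTheorem} directly to $H$, yielding
\[
\ker S_a = \ker H = \bigoplus_{\theta \in \Theta} \cP_\theta \, e_\theta, \qquad \cP_\theta = \sigma_- L^{-1} \sigma_\theta \cQ_\theta,
\]
which is the assertion.

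Honestly, there is almost no obstacle here: the corollary is a near-immediate specialization of Theorem~\ref{T:MainTheorem} once one recognizes that subdivision, restricted to its kernel, is just a simultaneous convolution condition across the cosets. The one point deserving a sentence of care is the Laurent-versus-polynomial issue and the role of a possible common monomial factor: one should note that since all $a_\xi$ have finite support, there is $\alpha \in \ZZ^s$ with $(\cdot)^{-\alpha} a_\xi^* \in \Pi$ for every $\xi$ simultaneously, and replacing $a_\xi$ by its corresponding shift leaves $\ker H$ invariant (shift invariance of the kernel) while only possibly removing a zero at the origin, which lies outside $\CC_\times^s$ and therefore does not appear among the $\theta^{-1}$. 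With that remark in place, the zero dimensionality hypothesis of the corollary matches the hypothesis of Definition~\ref{D:ZeroDimFilt}, and Theorem~\ref{T:MainTheorem} finishes the argument verbatim.
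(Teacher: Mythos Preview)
Your proposal is correct and follows exactly the route the paper takes: the coset computation $S_a c(\xi+\Xi\alpha)=(a_\xi*c)(\alpha)$ is displayed immediately before the corollary, so that $\ker S_a=\ker\{a_\xi:\xi\in E_\Xi\}$, and the claim is then an instant application of Theorem~\ref{T:MainTheorem}. Your extra sentence on the Laurent-versus-polynomial issue is more explicit than the paper (which handled this once and for all in the remark after Definition~\ref{D:ZeroDimFilt}), but otherwise there is nothing to add.
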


\noindent
To describe the kernel of a subdivision scheme in terms of the symbol
$a^*$ alone, we say that $\zeta \in \CC_\times^s$ is a \emph{symmetric
zero} of $a^*$ if
\begin{equation}
  \label{eq:SymmZeroDef}
  a^* ( e^{-2\pi i \Xi^{-T} \xi'} \zeta ) = 0, \qquad \xi' \in
  E_\Xi'.
\end{equation}
Symmetric zeros of $a^*$ are in one-to-one correspondence with common
zeros of $a_\xi^*$.

\begin{lemma}\label{L:CommonSymm}
  $\zeta$ is a symmetric zero of $a^*$ if and only if $\zeta^\Xi$ is a
  common zero of $a_\xi^*$, $\xi \in \Xi$.
\end{lemma}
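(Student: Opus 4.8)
The statement is essentially a bookkeeping consequence of the two conversion formulas (\ref{eq:SymbSubSymb}) and (\ref{eq:SymbSubSymb2}) between the symbol $a^*$ and its subsymbols $a_\xi^*$, so the plan is to read off both implications directly from those identities. The only genuine computation needed is the observation that the ``modulated'' point $e^{-2\pi i \Xi^{-T} \xi'} \zeta$ behaves like $\zeta$ after applying the substitution $z \mapsto z^\Xi$; this is what makes the two formulas mesh.

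First I would establish the key auxiliary fact: for every $\xi' \in E_\Xi'$ one has $\bigl( e^{-2\pi i \Xi^{-T} \xi'} \zeta \bigr)^\Xi = \zeta^\Xi$. Indeed, writing $z^\Xi = (z^{\xi_1},\dots,z^{\xi_s})$ with $\xi_j$ the $j$th column of $\Xi$, the $j$th component of $\bigl( e^{-2\pi i \Xi^{-T} \xi'} \bigr)^\Xi$ equals $e^{-2\pi i (\Xi^{-T}\xi')^T \xi_j} = e^{-2\pi i (\xi')^T \Xi^{-1}\xi_j} = e^{-2\pi i (\xi')^T e_j} = e^{-2\pi i \xi'_j} = 1$, since $\Xi^{-1}\xi_j = e_j$ and $\xi' \in \ZZ^s$. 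Hence the modulation cancels componentwise under $z \mapsto z^\Xi$.

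For the direction ``$\zeta^\Xi$ a common zero $\Rightarrow$ $\zeta$ a symmetric zero'', I would substitute $z = e^{-2\pi i \Xi^{-T}\xi'} \zeta$ into (\ref{eq:SymbSubSymb}), obtaining $a^*( e^{-2\pi i \Xi^{-T}\xi'}\zeta) = \sum_{\xi \in E_\Xi} \bigl( e^{-2\pi i \Xi^{-T}\xi'}\zeta \bigr)^\xi \, a_\xi^*\bigl( (e^{-2\pi i \Xi^{-T}\xi'}\zeta)^\Xi \bigr)$, and then use the auxiliary fact to replace the argument of $a_\xi^*$ by $\zeta^\Xi$; if all $a_\xi^*(\zeta^\Xi) = 0$ the whole sum vanishes for every $\xi' \in E_\Xi'$, which is exactly (\ref{eq:SymmZeroDef}). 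For the converse, I would simply plug $z = \zeta$ into (\ref{eq:SymbSubSymb2}): $a_\xi^*(\zeta^\Xi) = |\det\Xi|^{-1}\sum_{\xi' \in E_\Xi'} e^{-2\pi i \xi^T \Xi^{-T}\xi'} a^*( e^{-2\pi i \Xi^{-T}\xi'}\zeta)$, so if $\zeta$ is a symmetric zero of $a^*$ then every term on the right is zero and hence $a_\xi^*(\zeta^\Xi) = 0$ for all $\xi \in E_\Xi$.

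The proof involves no real obstacle; the one point that requires care is the componentwise cancellation in the auxiliary fact, i.e.\ that integrality of $\xi'$ together with $\Xi^{-1}\Xi = I$ kills the modulation factor after raising to the power $\Xi$. Once that is in hand, both inclusions are immediate from (\ref{eq:SymbSubSymb}) and (\ref{eq:SymbSubSymb2}) respectively.
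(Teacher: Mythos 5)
Your proposal is correct and follows essentially the same route as the paper: the auxiliary fact that the modulation factor collapses to $(1,\dots,1)$ under $z\mapsto z^\Xi$ is exactly the paper's equation (\ref{eq:UnitRoots}), and both directions are then read off from (\ref{eq:SymbSubSymb}) and (\ref{eq:SymbSubSymb2}) just as in the paper. Your write-up is in fact slightly more careful, as it retains the harmless factor $\bigl(e^{-2\pi i \Xi^{-T}\xi'}\zeta\bigr)^\xi$ in the sum that the paper's displayed formula omits.
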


\begin{proof}
  The key to the proof are (\ref{eq:SymbSubSymb}),
  (\ref{eq:SymbSubSymb2}) and the simple observation that 
  \begin{equation}
    \label{eq:UnitRoots}
    (e^{-2\pi i \Xi^{-T} \xi})^\Xi = e^{-2\pi i \Xi^T \Xi^T \xi} =
    e^{-2\pi i \xi} = (1,\dots,1).  
  \end{equation}
  Indeed, if $\zeta$ is a symmetric zero, then (\ref{eq:SymbSubSymb2})
  immediately yields that $a_\xi^* (\zeta^\Xi) = 0$, $\xi \in
  \Xi$, while for the converse we use (\ref{eq:SymbSubSymb}) and
  (\ref{eq:UnitRoots}) to verify that
  $$
  a^* ( e^{-2\pi i \Xi^{-T} \xi'} \zeta ) = \sum_{\xi \in E_\Xi}
  e^{-2\pi i \xi^T \Xi^{-T} \xi'} a_\xi^* ( \zeta^\Xi ) = 0
  $$
  holds for $\xi' \in E_\Xi'$, hence $\zeta$ is a symmetric zero.
\end{proof}

\noindent
Therefore, we can describe the kernel of a subdivision operator in terms
of its symmetric zeros.

\begin{corollary}\label{C:SubdKernel}
  There exists a polynomial space $\cP_\theta$ with $\cP_\theta
  e_\theta \subseteq \ker S_a$ if and only if $\theta^{-\Xi^{-1}}$ is
  a symmetric zero of $a^*$.
\end{corollary}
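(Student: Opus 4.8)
The plan is to combine Corollary~\ref{C:CommonSubdiv} with Lemma~\ref{L:CommonSymm}. By Corollary~\ref{C:CommonSubdiv}, the summand $\cP_\theta e_\theta$ appears in $\ker S_a$ precisely when $\theta^{-1}$ lies in the common zero set of the subsymbols $a_\xi^*$, $\xi \in E_\Xi$; equivalently, writing $\theta^{-1} = \zeta^\Xi$ for the appropriate $\zeta$, when $\zeta^\Xi$ is a common zero of all $a_\xi^*$. Lemma~\ref{L:CommonSymm} translates this last condition into ``$\zeta$ is a symmetric zero of $a^*$'', so it only remains to identify $\zeta$ in terms of $\theta$.

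First I would solve $\theta^{-1} = \zeta^\Xi$ for $\zeta$. Since $\Xi$ is an expanding integer matrix it is in particular invertible over $\RR$, so the map $z \mapsto z^\Xi$ on $\CC_\times^s$ is surjective with (multivalued) inverse $w \mapsto w^{\Xi^{-1}}$; one has $(\zeta^\Xi)^{\Xi^{-1}} = \zeta$ up to the kernel of $z \mapsto z^\Xi$, which by \eqref{eq:UnitRoots} consists exactly of the points $e^{-2\pi i \Xi^{-T} \xi'}$, $\xi' \in E_\Xi'$. Hence $\zeta = \theta^{-\Xi^{-1}}$ up to such a unit-root factor, and because the notion of symmetric zero \eqref{eq:SymmZeroDef} is by construction invariant under multiplication by any $e^{-2\pi i \Xi^{-T} \xi'}$, the statement ``$\zeta$ is a symmetric zero of $a^*$'' does not depend on which branch is chosen. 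Therefore $\zeta^\Xi$ is a common zero of the $a_\xi^*$ if and only if $\theta^{-\Xi^{-1}}$ is a symmetric zero of $a^*$, and stringing the two equivalences together yields the corollary.

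I would then spell out the ``only if'' and ``if'' directions explicitly for clarity. If $\cP_\theta e_\theta \subseteq \ker S_a$, then by the reasoning behind Corollary~\ref{C:CommonSubdiv} (splitting $S_a c = 0$ modulo $\Xi$ into $a_\xi * c = 0$) the zero set of $\Ideal{a_\xi^* : \xi \in E_\Xi}$ contains $\theta^{-1}$, so $\theta^{-1} = (\theta^{-\Xi^{-1}})^\Xi$ is a common zero of the $a_\xi^*$, and Lemma~\ref{L:CommonSymm} gives that $\theta^{-\Xi^{-1}}$ is a symmetric zero of $a^*$. Conversely, if $\theta^{-\Xi^{-1}}$ is a symmetric zero, Lemma~\ref{L:CommonSymm} makes $\theta^{-1}$ a common zero of the $a_\xi^*$, and with the associated multiplicity space $\cQ_\theta$ (which exists once we know the ideal $\Ideal{a_\xi^* : \xi \in E_\Xi}$ is zero dimensional — or we simply take the local multiplicity space at that particular zero, e.g. $\cQ_\theta = \Pi_0$ for a simple zero) Corollary~\ref{C:CommonSubdiv} produces $\cP_\theta e_\theta \subseteq \ker S_a$ with $\cP_\theta = \sigma_- L^{-1} \sigma_\theta \cQ_\theta$.

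The only genuinely delicate point is the bookkeeping around $\Xi$-th roots: one must make sure that replacing a common zero $\zeta^\Xi$ of the $a_\xi^*$ by the ``canonical'' root $\theta^{-\Xi^{-1}}$ is harmless, which is exactly what the unit-root identity \eqref{eq:UnitRoots} and the built-in symmetry of Definition~\eqref{eq:SymmZeroDef} guarantee. Everything else is a direct concatenation of the two previously established results, so I do not expect any further obstacle.
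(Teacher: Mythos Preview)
Your proposal is correct and follows exactly the route the paper intends: the corollary is stated without proof immediately after Lemma~\ref{L:CommonSymm}, as a direct combination of Corollary~\ref{C:CommonSubdiv} and Lemma~\ref{L:CommonSymm}. Your additional care about the multivaluedness of $\theta \mapsto \theta^{-\Xi^{-1}}$ and the observation that the definition of symmetric zero is invariant under the unit-root ambiguity is a welcome clarification that the paper leaves implicit.
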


\noindent
This result can be extended to zeros with multiplicity provided that
the structure of the multiplicity is simple enough. The following
corollary can be understood as a characterization of vanishing moments
of the associated synthesis filterbank,
cf. \cite{Sauer11:_shear_multir_multip_refin}.

\begin{corollary}\label{C:SubdKernelMult}
  For a subdivision operator $S_a$ with mask $a \in \ell_{00} (\ZZ^s)$
  and $\Theta \subset \CC_\times^s$ the following statements are equivalent:
  \begin{enumerate}
  \item $\theta^{\Xi^{-1}}$ is a symmetric zero of $a^*$ of order
    $k_\theta$, $\theta \in \Theta$.
  \item One has
    $$
    \ker S_a = \bigoplus_{\theta \in \Theta} \Pi_{k_\theta} \, e_\theta.
    $$
  \end{enumerate}
\end{corollary}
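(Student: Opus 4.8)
The plan is to derive Corollary~\ref{C:SubdKernelMult} by combining Corollary~\ref{C:CommonSubdiv}, Lemma~\ref{L:CommonSymm} and the special case discussed after Theorem~\ref{T:MainTheorem}, where $\cP_\theta = \cQ_\theta$ precisely when $\cQ_\theta$ is spanned by a lower set of monomials. First I would recall that ``$\theta^{\Xi^{-1}}$ is a symmetric zero of order $k_\theta$'' should be interpreted, in the language of Definition~\ref{D:ZeroDimFilt}, as the statement that the symbol $a^*$ together with all its derivatives up to order $k_\theta$ vanishes at $e^{-2\pi i \Xi^{-T}\xi'}\theta^{\Xi^{-1}}$ for every $\xi' \in E_\Xi'$; equivalently, writing $\zeta = \theta^{\Xi^{-1}}$, the $D$-invariant multiplicity space attached to the symmetric zero $\zeta$ is the fat point $\cQ = \Pi_{k_\theta}$. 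The translation to the subsymbols is then exactly Lemma~\ref{L:CommonSymm}: a symmetric zero $\zeta$ of $a^*$ corresponds to a common zero $\zeta^\Xi$ of the $a_\xi^*$. So I would first argue that the order-$k_\theta$ structure is preserved under this correspondence, i.e.\ that $\zeta$ is a symmetric zero of order $k_\theta$ if and only if $\zeta^\Xi$ is a common zero of $\{a_\xi^* : \xi \in E_\Xi\}$ with multiplicity space $\Pi_{k_\theta}$.

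The natural way to see this is to differentiate the two identities (\ref{eq:SymbSubSymb}) and (\ref{eq:SymbSubSymb2}) that relate $a^*$ and the $a_\xi^*$. Applying $q(D)$ for $q \in \Pi_{k_\theta}$ to (\ref{eq:SymbSubSymb2}) and evaluating at $z = \zeta^{\Xi^{-1}}$ expresses $q(D) a_\xi^*(\zeta^\Xi)$ (after a change of variables $w = z^\Xi$, which is a local diffeomorphism near a nonzero point and hence only mixes derivatives of a given polynomial among derivatives of the same total degree range) as a linear combination of the $\widetilde q(D) a^*(e^{-2\pi i \Xi^{-T}\xi'}\zeta)$ with $\deg \widetilde q \le \deg q$; conversely, differentiating (\ref{eq:SymbSubSymb}) and using (\ref{eq:UnitRoots}) recovers the derivatives of $a^*$ from those of the subsymbols. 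Because the change of variables $w \mapsto w^\Xi$ has an invertible Jacobian at a point of $\CC_\times^s$, the space $\Pi_{k_\theta}$ — being the full space of polynomials of bounded degree — is mapped to itself under the induced pullback on jets; this is the crucial point that makes the fat-point structure transfer cleanly, and it would fail for a general lower set of monomials, which is exactly why the corollary is stated only for fat points. Hence the multiplicity space of $a^*$ as a symmetric zero at $\zeta$ is $\Pi_{k_\theta}$ iff the multiplicity space $\cQ_\theta$ of the ideal $\Ideal{a_\xi^* : \xi \in E_\Xi}$ at $\zeta^\Xi$ is $\Pi_{k_\theta}$.

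Once this equivalence is in place, the rest is a bookkeeping step. By Corollary~\ref{C:CommonSubdiv}, $\ker S_a = \bigoplus_{\theta \in \Theta} \cP_\theta e_\theta$ where $\cP_\theta = \sigma_- L^{-1} \sigma_\theta \cQ_\theta$ and $\cQ_\theta$ is the multiplicity space of $\Ideal{a_\xi^*}$ at the common zero, which by Lemma~\ref{L:CommonSymm} sits at $\zeta^\Xi = (\theta^{\Xi^{-1}})^\Xi = \theta$ — so the zero set in the sense of Corollary~\ref{C:CommonSubdiv} is $\Theta^{-1}$ exactly when the symmetric zeros of $a^*$ are the $\theta^{\Xi^{-1}}$. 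When $\cQ_\theta = \Pi_{k_\theta}$, the remark after Theorem~\ref{T:MainTheorem} gives $\widehat\cQ_\theta = \cQ_\theta$, $L\cQ_\theta = \cQ_\theta$ and $\sigma_-\cQ_\theta = \cQ_\theta$, hence $\cP_\theta = \Pi_{k_\theta}$; thus statement (1) implies statement (2). For the converse, if $\ker S_a = \bigoplus_\theta \Pi_{k_\theta} e_\theta$, then comparing with the direct-sum description of Corollary~\ref{C:CommonSubdiv} forces, exponential by exponential, $\cP_\theta = \Pi_{k_\theta}$, and since $\dim \cP_\theta = \dim \cQ_\theta$ and $\cP_\theta$ determines $\cQ_\theta$ via $\cQ_\theta = \sigma_\theta^{-1} L \sigma_- \cP_\theta$ (using $L\Pi_{k} = \Pi_k$ and $\sigma_\pm \Pi_k = \Pi_k$), we recover $\cQ_\theta = \Pi_{k_\theta}$, which by the equivalence of the previous paragraph says precisely that $\theta^{\Xi^{-1}}$ is a symmetric zero of $a^*$ of order $k_\theta$.

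I expect the main obstacle to be the first step: making rigorous the claim that applying a differential operator to the transformation formulas (\ref{eq:SymbSubSymb})–(\ref{eq:SymbSubSymb2}) and substituting $w = z^\Xi$ carries the fat point $\Pi_{k_\theta}$ to itself. Concretely one must check that the pullback of polynomial jets of order $\le k$ under the monomial-power map $z \mapsto z^\Xi$, localized at a point of $\CC_\times^s$, is a bijection of the space of such jets — this uses invertibility of the Jacobian together with the observation that the image of a polynomial of degree $\le k$ under a local-analytic substitution, truncated at order $k$, still ranges over all of $\Pi_k$. Everything else (the dimension count, the $L$- and $\sigma_-$-invariance of $\Pi_k$, the bookkeeping of which $\theta$ corresponds to which symmetric zero) is routine and already prepared in the text.
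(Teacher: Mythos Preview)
Your proposal is correct and follows essentially the same route as the paper: reduce to Corollary~\ref{C:CommonSubdiv} and the fat-point remark after Theorem~\ref{T:MainTheorem}, and show that the order-$k_\theta$ condition transfers between $a^*$ at the symmetric zero and the subsymbols $a_\xi^*$ at $\zeta^\Xi$ by differentiating (\ref{eq:SymbSubSymb}) and (\ref{eq:SymbSubSymb2}). The paper carries out the step you flag as the ``main obstacle'' concretely: it computes the chain rule for $\nabla^j\bigl(a^*((\cdot)^\Xi)\bigr)$ and shows by induction that the top-order coefficient matrix is the $j$-fold Kronecker power of the Jacobian matrix $A_\Xi^*(z)$, hence invertible on $\CC_\times^s$; this is exactly your ``invertible Jacobian $\Rightarrow$ bijection on $\Pi_k$-jets'' statement made explicit.
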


\begin{proof}
  The only thing left to prove is the issue of multiplicity. To that end,
  we note that
  $$
  \nabla a^* \left( (\cdot)^\Xi \right) = A_{\Xi}^* (z)
  \, ( \nabla a^* ) \left( (\cdot)^\Xi \right), 
  $$
  where
  $$
  A_{\Xi}^* (z) := 
  \left[
    \begin{array}{ccc}
      z_1^{-1} \\ & \ddots \\ & & z_s^{-1}
    \end{array}
  \right] \Xi \left[
    \begin{array}{ccc}
      z^{\xi_1} \\ & \ddots \\ & & z^{\xi_s}
    \end{array}
  \right]
  $$
  is nonsingular for $z \in \CC_\times^s$. Turning to the total
  derivatives $\nabla^j = \left[ \frac{\partial^j}{\partial z^\alpha}
    \;:\; | \alpha| = j \right]$ of order $j \le k$, we observe that
  that
  \begin{equation}
    \label{eq:nablajform}
    \nabla^j a^* \left( (\cdot)^\Xi \right) = \sum_{\ell = 0}^j
    A_{j,\ell,\Xi}^* (z) ( \nabla^\ell a^* ) \left( (\cdot)^\Xi \right),  
  \end{equation}
  where
  $$
  A_{j,j,\Xi}^* (z) = A_{\Xi}^* (z) \otimes \cdots \otimes A_{\Xi}^*
  (z)
  $$
  is the $j$--fold Kronecker product of $A_\Xi^*$ with itself and thus
  nonsingular for any $z \in \CC_\times^s$. This follows from applying
  $\nabla$ to (\ref{eq:nablajform}) which yields inductively
  \begin{align*}
    \lefteqn{
      \nabla^{j+1} a^* \left( (\cdot)^\Xi \right)  = \sum_{\ell = 0}^j
      \nabla A_{j,\ell,\Xi}^* (z) \, ( \nabla^\ell a^* ) \left(
        (\cdot)^\Xi \right) + 
      A_{j,\ell,\Xi}^* (z) \, \nabla ( \nabla^\ell a^* ) \left(
        (\cdot)^\Xi \right)} \\
    & = A_{j,j,\Xi}^* (z) \left[ A_\Xi^* (z) \,
      \nabla \frac{\partial^j}{\partial z^\alpha a^*} \;:\; |\alpha| =
      j \right] \left(
      (\cdot)^\Xi \right) + \sum_{\ell = 0}^j A_{j+1,\ell,\Xi}^* (z) \,
    ( \nabla^\ell a^* ) \left((\cdot)^\Xi \right) \\ 
    & = \left( A_{j,j,\Xi}^* (z) \otimes A_\Xi^* (z) \right)(
    \nabla^{j+1} a^* ) \left((\cdot)^\Xi \right) + \sum_{\ell = 0}^j
    A_{j+1,\ell,\Xi}^* (z) \, ( \nabla^\ell a^* ) \left((\cdot)^\Xi
    \right).
  \end{align*}
  Consequently, we have for any $\theta \in \CC_\times^s$ that
  $\left( \nabla^j a^* \left( (\cdot)^\Xi \right) \right)
  (\theta^{-1}) = 0$, $j=0,\dots,k_\theta$ if and only if $( \nabla^j
  a^* ) \left( \theta^{-\Xi} \right) = 0$, $j=0,\dots,k$.
  
  With this observation, the claim follows immediately from
  differentiating (\ref{eq:SymbSubSymb}) and (\ref{eq:SymbSubSymb2}).
\end{proof}


\bibliographystyle{amsplain}
\bibliography{../bibls/subdiv,../bibls/sigproc,../bibls/books,../bibls/wavelets,../bibls/interpol,../bibls/numerik,../bibls/compalg,../bibls/dipldiss,../bibls/approx,../bibls/cagd,../bibls/ideal}
\end{document}